\documentclass[11pt,a4paper,makeidx, amstex]{amsart}
\usepackage[latin1]{inputenc}        
\usepackage[dvips]{graphics}
\usepackage{pstricks-add}
\usepackage
{graphicx}
\usepackage{epstopdf}
\usepackage{mathrsfs}
\let\mathcal\mathscr
\headsep 0.8 cm
\pagestyle{empty}                            
\oddsidemargin -10 true pt      
\evensidemargin 10 true pt      
\marginparwidth 0.75 true in    
\oddsidemargin  0 true in       
\evensidemargin 0 true in
\topmargin -0.75 true in        
\textheight 9.5 true in         
\textwidth 6.375 true in        
\parindent=0pt                  
\parskip=0.15 true in
\usepackage{color}		
\usepackage{epsfig}
\psdraft
\usepackage{amssymb}
\usepackage{amsmath}
\usepackage{amsfonts}
\usepackage{latexsym}
\usepackage[T1]{fontenc}
\usepackage[latin1]{inputenc}
\usepackage{hyperref}
\usepackage[all,ps]{xy}
\RequirePackage{amsthm}
\RequirePackage{amssymb}
\usepackage[all,ps]{xy}
\usepackage{latexsym}
\usepackage{amscd}
\pagestyle{plain}

\parskip.7ex
\textwidth 6.2in
\textheight 9.2in
\topmargin -.15in
\evensidemargin.05in
\oddsidemargin.05in

\pagestyle{plain}
\usepackage[latin1]{inputenc}
\usepackage[T1]{fontenc}
\usepackage{color}
\RequirePackage{amsthm}
\RequirePackage{amssymb}
\usepackage[all,ps]{xy}
\usepackage{latexsym}

\def\P{{\bf P}}

\def\R{{\mathbb R}}
\def\Q{{\mathbb Q}}

\def\Mor{\mathop{\rm Mor}\nolimits}

\def\supp{\mathop{\rm Supp}\nolimits}

\def\tilde{\widetilde}

\def\phi{\varphi}

\def\D{{\Delta}}

\def\cC{{\mathcal C}}
\def\cE{{\mathcal E}}

\def\cL{{\mathcal L}}

\def\cO{{\mathcal O}}
\def\cP{{\mathcal P}}

\def\cR{{\mathcal R}}

\def\cV{{\mathcal V}}
\def\cW{{\mathcal W}}
\def\cX{{\mathcal X}}
\def\cY{{\mathcal Y}}

\def\dim{\mathop{\rm dim}\nolimits}

\def\Mor{\mathop{\rm Mor}\nolimits}

\newtheorem{thm}{Theorem}[section]
\newtheorem{defn}[thm]{Definition}

\newtheorem{claim}[thm]{Claim}
\newtheorem{convention}[thm]{Convention}
\newtheorem{alg}[thm]{Algorithm}

\newtheorem{rmk}[thm]{Remark}
\newtheorem{prop}[thm]{Proposition}
\newtheorem{lem}[thm]{Lemma}

\newtheorem{ex}[thm]{Example}

\title[Remarks about Bubbles]{Remarks about Bubbles}
\author[Michael McQuillan and Gianluca Pacienza]{Michael McQuillan and Gianluca Pacienza}
\date{\today}

\begin{document}
\maketitle
{\let\thefootnote\relax
\footnote{\hskip-1.2em
\textbf{Key-words :} rational curves; Mori theory; champs de Deligne-Mumford. 
\textbf{A.M.S.~classification :} 14J40. \\
This work was partially supported by the French--Italian European Research Group in Algebraic Geometry GDRE-GRIFGA. G.P. was partially supported by the ANR project "CLASS'' no. ANR-10-JCJC-0111.

 }}
\numberwithin{equation}{section}

{\bf Abstract}: {\small We make some remarks about bubbling
on, not necessarily proper, champs de Deligne-Mumford,
{\it i.e.} compactification of the space of
mappings from a given (wholly scheme like) curve, so, in
particular, on quasi-projective projective varieties. Under
hypothesis on both the interior and the boundary such as
Remark \ref{rmk:interior} below, this implies an optimal
logarithmic variant of Mori's Bend-and-Break. The main technical
remark is \ref{thm:logMM}, while our final remark, the cone theorem,
\ref{rmk:cone}, is a variant.}

%
%
%
%
\section{Introduction}
%

Gromov convergence in the setting of,
say, a compact K\"ahler manifold, $X$,
with boundary $\Delta=\sum_i\Delta_i$ is amusing. The
basic point is a lemma of Mark Green,
\cite{green}, that if $f_n\to f$ 
are maps from a Riemann surface $\Sigma$
converging uniformly
on compact sets and $D$ a divisor on $X$
such that $f_n^{-1}(D)=\emptyset$, then
this can only fail for $f$ if it factors
through $D$. Trivially,
\cite[Fact 2.2.4]{bloch}, this implies
that if $f_n:\Sigma\to X\backslash \Delta$
converges to a disc with (non-trivial) 
bubbles then there must be a map from $\mathbb{A}^1$
to $X\backslash\D$ or some other stratum,
$\cap_{i\in I}\D_i\backslash\cup_{j\notin I} \D_j$,
where for notational convenience we write
the latter as $X_I\backslash \D_J$, which
includes the former for $I=\emptyset$.     
Once the local algebra in the purely
meromorphic context is identified,
\ref{prop:2} \& \ref{prop:b&b}, this
combines with Mori's bend \& break
technique to yield some further 
amusement, {\it e.g.},

\begin{rmk}\label{rmk:interior}
Let $V$ be a smooth quasi-projective variety,
over an algebraically closed field of any
characteristic,
and suppose it admits a compactification
$V\hookrightarrow X$ such that $X$ is
projective with at worst quotient singularities
and for $\Delta_i$ the irreducible components
of the boundary $\Delta=X\backslash V$,
no stratum $X_{I}\backslash\D_J$, $I\neq\emptyset$,
admits a non-trivial map from $\mathbb{A}^1$, then
for every map from a curve
$f:C\to X$ such that $(K_X+\D)\cdot _f C<0$
to every $x\in f(C)\cap V$ there is a
rational curve $L_x\ni x$ whose intersection
with $D$ is supported in at most 1 point
and which satisfies,
$$M\cdot L_x\leq 2\cdot\dim (X) \dfrac{M\cdot _f C}{-(K_X+\D)\cdot _f C}$$
for every nef. divisor $M$ on $X$. In
particular by \cite{BDPP}, if 
$K_X+\Delta$ is not pseudo effective,
then $V$ is covered by $\mathbb{A}^1$'s.
\end{rmk}

This follows immediately from \ref{thm:logMM},
which we shall not repeat in this introduction
since it is designed to deal simultaneously
with a situation such as the above where one
has optimal control on the bubbling in the
boundary, but otherwise it may be supposed
to be any old rubbish, {\it i.e.} no klt, dlt,
or whatever is required, and, as we shall see,
hypothesis of much greater boundary regularity. Nevertheless,
\ref{thm:logMM}, is not a catch all. For example
the same statement is true if we suppose that
$V$ has quotient singularities with the same
proof, but the nuisance is that it does not
yield maps from $\mathbb{A}^1$ to the Vistoli
covering champ $\cV\to V$, {\it i.e.} the smallest
smooth champ with moduli $V$, \cite{V}, and, just as bad,
even in the hypothesis of \ref{rmk:interior}
with $\cX\to X$ the Vistoli covering champ
one cannot (without more thought) replace the condition of no 
$\mathbb{A}^1$'s in the higher strata
$X_{I}\backslash\D_J$ by no
$\mathbb{A}^1$'s in $\cX_{I}\backslash\D_J$.
The phenomenon which is occasioning this is
explained in \ref{rmk:15neri}, while some 
more trivial obstructions are 
recorded in \ref{ex:noAb} \& \ref{ex:campana}.
Nevertheless, modulo the usual technical
problem about smoothness assumptions in
bend \& break, one does produce $\mathbb{A}^1$'s
in the full generality of champs de Deligne-Mumford
with quasi-projective moduli. The misfortune
is that one cannot necessarily guarantee
without conditions such as \ref{rmk:interior},
and even if there were no boundary, that
these curves pass through a specified point.
Consequently, even for klt. surfaces, 
there
are things that are being missed.  
For example if $S$ has no boundary, and an
ample anti-canonical divisor, 
\ref{thm:logMM} might 
only yield a single map from $\mathbb{P}^1\to \mathcal{S}$ to
the Vistoli covering champ, even though its
known,  \cite{KMcK}, that $\mathcal{S}$ is ruled.
Similarly, \cite[2.1.3]{blochp}, bubbles (strong Gromov sense) cannot form in
the boundary of minimal quasi-projective surfaces,
but, as we've said, by \ref{rmk:15neri}, this is
necessarily a global phenomenon, and not something local that
can be attributed to the absence of parabolic champs in
the strata of the boundary.
On the other hand, these aren't obstructions
to proving a cone theorem, which, \ref{thm:cone},  we duly do, 
to wit:
 
\begin{rmk}\label{rmk:cone}
Let $X$ be a projective algebraic variety
over an algebraically closed field of any
characteristic and let 
$\cX\to X$ be a smooth tame champ de Deligne-Mumford over it
({\it i.e.} has the same moduli),
with $\D=\sum_i\D_i$  a snc
divisor on $\cX$, then
there exists a countable family $\{L_k\subset X\}$ 
of curves whose induced champs $\cL_k\to L_k$
are parabolic ({\it i.e.} admit maps 
from some $\mathbb{A}^1$) in some 
stratum $\cX_{I_k}\backslash\D_{J_k}$, cf. \ref{conv:X_I}, satisfying,
$$
 0<-(K_{\cX}+\D)\cdot \cL_k\leq  2\cdot \dim(X),
$$
such that 
\begin{equation}
\overline {\mathrm{NE}}(\cX)=
 \overline {\mathrm{NE}}(\cX)_{(K_\cX+\D)\geq 0} + \sum_k \R_{+} [\cL_k]
\end{equation}
and the parabolic rays $\R_{+}[\cL_k]= \R_{+} [L_k]$ 
are locally discrete in $\mathrm{N}_1(\cX)_{(K_\cX+\D)< 0}$.
\end{rmk}

The bound $2\cdot\dim (X)$ is, of course, not
optimal, and while it's easy enough to get
$\dim (X)+1$ instead, one has to vary the
construction, albeit even with the construction
as is one quickly gets better than $\dim (X)+1$
if the situation is very non-schematic, cf.
\ref{rmk:bounds}. A final technical remark 
is that we never do any deformation theory
which is more complicated than that from an
honest curve to a champ. This eliminates the
habitual difficulties such as the graph of
a map may not be an embedding, and one doesn't
even need to know that there is such a thing
as a Hilbert champ of a champ, even though,
\cite{HR}, it's re-assuring that there is.
Nevertheless one does need to know that every
smooth tame champ de Deligne-Mumford $\cC\to C$
over a curve admits a map from a curve.
One could have gone through the same hoops
as \cite[Expos\'e X]{SGA1} to deduce this
in all characteristics from \cite{BN}, but
it seems easier to just prove it directly,
\ref{prop:courbes}, and, of course, one
gets a description of the tame fundamental
group of $\cC$ en passant.



%
%
\section{Preliminaries}\label{S:prel}
%

%
\subsection{Set-up}\label{SS:set-up}
%

We work over an algebraically closed filed $k$ of any characteristic.
For brevity we make, 

\begin{defn}\label{defn:champ}
By a champ $\cX$ is to be understood a tame champ de Deligne-Mumford over $k$,
with quasi-projective moduli $\pi:\cX\to X$. As such $\cX/k$ is separated,
and it is proper iff $X$ is projective. Thus, consistent with the ideas in
\cite[expos\'e VI]{SGA1}, 
the above data may also, as convenience of exposition dictates, be referred to
as a champ over $X$,
and
the mis-translation stack will be eschewed.
\end{defn}

Let $\cX$ be a champ. 
A Cartier divisor $\D=\sum_i\D_i$ on $\cX$ 
is said to be {\it snc},
if $\cX$ is smooth at every
point of the support,
every irreducible component $\D_i$ of $\D$ is
smooth, and the components through any geometric
point form a system of parameters.
For any effective
Weil divisor with $\D=\sum_i\D_i$
its expression as a sum of
irreducible components, and
$I$ a set of irreducible components of 
$\D$ we define the I-th stratum:
\begin{equation}\label{eq:X_I}
\cX_I:=\bigcap_{i\in I} \D_i.
\end{equation}
When it has
sense, {\it e.g.} $\cX$ CM,
each $\D_i$ Cartier with local equations
forming a regular sequence, and $\cX_I$
reduced, and only when it has sense as a $\Q$-Cartier divisor, 
its canonical divisor is:
\begin{equation}\label{eq:K_X_I}
K_{\cX_I}:=(K_\cX+\sum_{i\in I}\D_i)_{|\cX_I}.
\end{equation}
and we continue to understand $K_{\cX_I}$
by this formula,
and say $K_{\cX_I}$ is $\mathbb{Q}$-Cartier
even if this only has
the sense that the above right hand side
is the restriction of a $\mathbb{Q}$-Cartier
divisor on $\cX$.
\begin{convention}\label{conv:X_I}
On writing $f:C\to \cX_I$, 
or $\cC\subset \cX_I$,
unless otherwise specified, we suppose that $I$ is the maximal set of components of $\D$ containing $f(C)$, respectively $\cC$.
In the implicit presence of such an $I$, we 
denote by $J$ the set of components complementary to $I$ and
should every $\D_j$, $j\in J$, be $\mathbb{Q}$-Cartier we put:
$$
\D_J:=\sum_{j\in J}{\D_j}_{|\cX_I}.
$$
\end{convention}

%
\subsection{Curves and Uniformisation}\label{ss:DMrat}
%

\begin{defn}\label{defn:euler}

Let $\cC_0\to C$ be a (smooth connected) champ over a curve, without
generic stabiliser, then we extend the topological
Euler characteristic $C\mapsto \chi(C)$ by way of,
$$\chi(\cC_0):=\chi(C) + \sum_{c\in C} (\frac{1}{n_c}-1)$$
where
$n_c$ the order of the local monodromy. For a
general (smooth connected) champ $\cC\to C$, with generic
stabiliser $G$, there is a 
fibration $\cC\to\cC_0$,
$\cC_0$ as above,
with fibre $B_G$ and:
$$ \vert G\vert  \chi(\cC) := \chi(\cC_0)$$
Should the champ be proper,
this is equally the negative of the degree of the
canonical bundle, and should $\chi(\cC)>0$ we will
say that the champ is parabolic.

\end{defn}

The presence of stabilisers, particularly generic
ones, is a recipe for technical problems, {\it e.g.}
graphs of mappings may fail to be embeddings,
which we eliminate by way of,

\begin{prop}\label{prop:courbes}
Let $\pi:\cC\to C$ be a (smooth connected) champ over a curve, then
there is a finite proper map from a curve $B$ to
$\cC$. Better still,
\begin{enumerate}
\item[(a)] $\chi(B)<0$, respectively, $=0$, or $>0$,
whenever the same is true of $\chi(\cC)$.
\item[(b)] If $\chi(\cC)\leq 0$, then we may 
take $B\to\cC$ to be \'etale, and even realise
$\cC$ as $[B/E]$ for $E$ an extension,
$$
\begin{CD}
1@>>> G@>>> E@>{\rho}>> H@>>> 1
\end{CD}
$$
acting via $\rho$ for $H$ a sub-group of $\mathrm{Aut}(B)$,
and $G$ the monodromy around a generic point of $\cC$.
\item[(c)] Otherwise $B=\mathbb{P}^1$ or $\mathbb{A}^1$.
\end{enumerate}
\end{prop}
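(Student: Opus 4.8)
The plan is to reduce everything to the orbifold geometry of the coarse curve $C$, where the three cases of the statement become the classical trichotomy between hyperbolic, Euclidean and spherical orbi-curves, and where the \'etale assertion of (b) is exactly the \emph{goodness} (developability) of non-positively curved orbi-curves. First I would dispose of the generic stabiliser $G$ using the fibration $\cC\to\cC_0$ of \ref{defn:euler}: it is enough to construct $B\to\cC_0$ with the asserted properties and then to lift it across the $G$-gerbe $\cC\to\cC_0$. Pulling this gerbe back along a map $B\to\cC_0$ from a scheme gives a $G$-gerbe over the curve $B$; since the band is the finite tame monodromy group $G$, this gerbe is pinned down by a class in $H^{2}$ of $\cC_0$ with coefficients in $G$, and realising $\cC$ as $[B/E]$ amounts to matching this class with an extension $E$ of $H$ by $G$ (with $G=\ker\rho$ acting trivially on $B$) pulled back along $BH\to\cC_0=[B/H]$. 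Thus (b) splits into (i) the representable \'etale cover $B\to\cC_0$ and (ii) a cohomological bookkeeping identifying the extension $E$.

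For the heart, (i), I would use the root-stack description of $\cC_0$: away from finitely many points $c_1,\dots,c_r\in C$ of local order $n_1,\dots,n_r$ the champ $\cC_0$ is just $C$, and near $c_i$ it is the $n_i$-th root stack. Consequently a map to $\cC_0$ from a smooth curve $B$ is the same as a tamely ramified map of curves $B\to C$ whose ramification index over each $c_i$ is divisible by $n_i$, and such a map is \emph{\'etale as a map of champs} precisely when that index equals $n_i$ at every point over $c_i$ and the map is unramified elsewhere. So (i) becomes: produce a finite Galois cover $B\to C$, tamely ramified, with ramification index exactly $n_i$ over $c_i$ and \'etale away from $\{c_i\}$. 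Such a cover exists exactly when $\chi(\cC_0)\le 0$; this is where the hypothesis enters, and it is the main obstacle. I would establish it directly rather than via \cite{SGA1,BN}: realise the prescribed tame ramification by a composite of Kummer covers adapted to $(C,\sum c_i)$ and then pass to a Galois closure, using $\chi(\cC_0)\le 0$ to guarantee that the local orders can be saturated without over-ramifying---equivalently, that the orbifold fundamental group admits a finite quotient in which each local generator keeps its full order $n_i$. Setting $H=\mathrm{Gal}(B/C)$ one then has $\cC_0=[B/H]$ with $B\to\cC_0$ \'etale, which is (b)(i); the sign statement (a) in this case is the multiplicativity $\chi(B)=|H|\,\chi(\cC_0)$ under an \'etale cover.

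For the parabolic case (c), $\chi(\cC_0)>0$, the orbi-curve is spherical and may well be a \emph{bad} orbifold, so no \'etale uniformisation by a scheme need exist; I would instead drop the requirement that the ramification be exactly $n_i$ and simply produce \emph{some} finite cover of $C$ by $\mathbb{P}^1$ (proper case) or $\mathbb{A}^1$ (affine case) through which the stacky structure is killed, i.e. a tamely ramified cover with ramification a multiple of each $n_i$ whose source has positive Euler characteristic. This is possible precisely because $\chi>0$ forces $C=\mathbb{P}^1$ or $\mathbb{A}^1$ with very few, low-order stacky points, a short finite list to check by hand. Composing with $\cC\to\cC_0$ and a final cover trivialising the pulled-back $G$-gerbe gives $B=\mathbb{P}^1$ or $\mathbb{A}^1$ mapping finitely and properly to $\cC$, again with the sign of $\chi$ preserved.

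I expect the genuine difficulties to be three. The first and principal one is the characteristic-free construction in (i) of a Galois cover with \emph{exactly} the prescribed tame ramification, i.e. the goodness of non-positively curved orbi-curves over an arbitrary algebraically closed field; in characteristic $p$ one must stay within the tame fundamental group and check that saturating the prime-to-$p$ local orders is unobstructed when $\chi\le 0$. The second is the gerbe lift (ii): controlling the band $G$ and the extension $E$ so that the identity $\cC=[B/E]$ holds on the nose, which is where the promised description of the tame fundamental group of $\cC$ is read off. The third, more clerical, is the finite enumeration of small spherical cases in (c) needed to exhibit $\mathbb{P}^1$ or $\mathbb{A}^1$ explicitly, where the general construction degenerates.
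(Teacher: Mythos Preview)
Your two-step architecture --- first pass from $\cC$ to the rigidification $\cC_0$, then handle the $G$-gerbe --- is exactly the paper's, but the emphasis is inverted. You propose to work hard on (i), the existence of a scheme-cover $B\to\cC_0$ with prescribed tame ramification when $\chi\le 0$, and then treat the gerbe (ii) as ``cohomological bookkeeping''. The paper does the reverse: it cites the $G$-trivial case as known (\cite[Lemma~6.5]{Kbug}), base-changes along the resulting $F\to\cC_0$ to reduce to a gerbe $\cC\to C$ over an honest curve, and then spends its effort on a concrete \v{C}ech/groupoid analysis of that gerbe.

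That analysis is where your sketch is thinnest, and it is worth saying what the paper actually does, since ``a class in $H^2$ with coefficients in $G$'' is not quite right when $G$ is non-abelian. Writing $\cC=[U/R]$ and choosing a section $\sigma$ of $R\to U\times_C U$, conjugation gives a map $\Sigma:U\times_C U\to\mathrm{Aut}(G)$ whose image in $\mathrm{Out}(G)$ is a \v{C}ech $1$-cocycle, i.e.\ a class $\bar\Sigma\in H^1_{\mathrm{\acute{e}t}}(C,\mathrm{Out}(G))$ recording the band; the failure of $\Sigma$ itself to be a cocycle is an inner twist by some $g$ satisfying the crossed $2$-cocycle identity. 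In case~(b) the curve $C$ (and indeed $\cC_0$) is a tame \'etale $K(\pi,1)$, and that identity on $g$ is \emph{exactly} the datum of an extension $1\to G\to E\to\pi\to 1$ with outer action $\bar\Sigma$; passing to a suitable finite quotient of $E$ into which $G$ still injects gives the finite $H\le\mathrm{Aut}(B)$ of the statement. In case~(c) one has $C=\mathbb{P}^1$ or $\mathbb{A}^1$, so $\bar\Sigma$ is trivial and, after adjusting $\sigma$, the class $g$ lands in the centre $Z$, giving $\bar g\in H^2_{\mathrm{\acute{e}t}}(C,Z)$; this is zero for $\mathbb{A}^1$ and is killed on $\mathbb{P}^1$ by a single cyclic self-cover $x\mapsto x^l$, after which $\cC\cong C\times B_G$ and $B=C$ mapping via $\mathrm{pt}\to B_G$ finishes. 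So your ``final cover trivialising the pulled-back $G$-gerbe'' is correct in spirit, but the mechanism is this explicit $H^2(\mathbb{P}^1,Z)$-killing, not a further case check.

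Two small cautions on your write-up. First, the sentence about the gerbe being ``pulled back along $BH\to\cC_0=[B/H]$'' has the arrow the wrong way: the relevant map is $[B/H]\to BH$, and one compares $\cC\to[B/H]$ with the pullback of $BE\to BH$. Second, getting $\cC=[B/E]$ on the nose requires the pulled-back gerbe over $B$ to be trivial; in the paper's order of operations this is automatic because one has already arranged $\cC\to C$ to be the gerbe and identified it with the extension of $\pi_1$, whereas in your order you must either enlarge $B$ or invoke the same $K(\pi,1)$ argument on $B$ to see it.
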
 
\begin{proof}
The assertion is well known if $G$ is trivial,
{\it e.g.} \cite[Lemma 6.5]{Kbug}.
In particular by way of a base change, 
$F\to\cC_0$, \'etale should (b) apply, 
with $\cC_0$ as per \ref{defn:euler},
and $F$ a curve we may suppose  that $C=\cC_0$.
Now write $\cC=[U/R]$ for some \'etale cover
$U\to\cC$. By hypothesis $(s,t):R\to U\times U$ is
proper, and taking $U$ sufficiently fine,
we may suppose that its image is $U\times_C U$.
In addition, the  stabiliser,
$S:=(s,t)^{-1}(\D)$, $\D\hookrightarrow U\times_C U$
the diagonal, admits a, non-canonical, isomorphism
$\phi:S\to U\times G$,
while an arbitrary fibre of $R\to U\times_C U$
may be identified with the stabiliser of its
source, so $R\to U\times_C U$ is \'etale, whence
refining $U$ as necessary we may suppose that
it admits a section $\sigma$. Conjugation
by $\sigma$ combined with $\phi$, yields
a map $\Sigma:U\times_C U\to \mathrm{Aut}(G)$,
whose image in $\mathrm{Out}(G)$ is a co-cycle,
thus affording,
$$\bar{\Sigma}\in\mathrm{H}_{\mathrm{\acute{e}t}}^1(C, \mathrm{Out}(G))$$
The failure of $\Sigma$ to be a co-cycle
in $\mathrm{Aut{G}}$ is rather precise,
{\it viz:} applying the C\v{e}ch co-boundary operator
at the level of arrows yields a map,
$g:V\times_C V\times_C V\to G$ such
that the co-boundary of $\Sigma$ is
the inner automorphism associated to
$g$. The naturality of $g$ implies, in
the usual C\v{e}ch notation, that:
$$g_{\alpha\beta\delta}^{-1}g_{\alpha\gamma\delta}g_{\alpha\beta\gamma}
=\Sigma_{\alpha\beta}(g_{\beta\gamma\delta}).$$

At this point we must distinguish the
case (b) and (c) of the proposition.
In the former $C$ is a (tame) \'etale $K(\pi, 1)$,
and the above condition on $g$ says exactly
that there is a group extension,
$$1\to G\to E\to \pi \to 1$$
with action of $\pi$ by outer automorphisms
defined via $\bar{\Sigma}\in \mathrm{Hom}(\pi, \mathrm{Out}(G))$,
and implied 2 co-cycle defined by $g$
viewed as a map from $\pi\times\pi$ to $G$.
Here $\pi$ is the 
(prime to the characteristic)
pro-fundamental
group, so this proves the assertion
(b) when $C=\cC_0$ which is all that
we'll need in the sequel, while in
general $\cC_0$ is also a (tame) \'etale
$K(\pi, 1)$, and the same argument
works at the price of some notational
complication occasioned on replacing
$U\times_C U$ by the arrows $R_0$
defining $\cC_0$, and so forth. 

As to
case (c) of the proposition, we may
suppose that $\cC_0=C$ is either
$\mathbb{P}^1$ or $\mathbb{A}^1$, so
that in either case $\bar{\Sigma}$ is 
homologous to zero. Adjusting both
$\phi$, and $\sigma$ accordingly,
we may suppose that $\Sigma$ is
trivial, and $g$ takes values in
the centre $Z$ of $G$. In particular
by the above formula it defines a
class,
$$\bar{g}\in\mathrm{H}^2_{\mathrm{\acute{e}t}}(C,Z)$$
necessarily null if $C=\mathbb{A}^1$,
and otherwise null after a base change
of the form,
$$\mathbb{P}^1\to\mathbb{P}^1 : x\mapsto x^l$$
for some sufficiently large $l$.
Consequently after a base change, and
an adjustment of $\sigma$ by central
elements, we obtain a 1-isomorphism,
given on arrows by,
$$R\to C\times_C G: f\mapsto (s(f), f\sigma(s\times t(f))^{-1})$$
for $G$ acting trivially on $C$, {\it i.e.}
$\cC=C\times B_G$, and we can take
$B=C$ mapping to $\cC$ via the 
natural projection $\mathrm{pt}\to B_G$.
\end{proof}

To which let us add,

\begin{rmk}\label{rmk:curves}
{\em Of course $\cC_0\to C$ is parabolic only if the
number of non-schematic points is at most 3, and
should it be 3 then the universal cover of
$\cC_0$ is $\mathbb{P}^1$, and one may enumerate
the possibilities where this occurs. In the
general case, however, there is no such simple
enumeration, so that being more precise than
\ref{prop:courbes}(c) is pointless. Nevertheless
it gives an equivalent criteria for parabolicty,
{\it viz:} A champ is parabolic iff it is dominated
by $\mathbb{A}^1$, or $\mathbb{P}^1$
should it be proper.}
\end{rmk}

The above suggests that a useful
way to think of a quasi-projective
variety is as a champs with infinite
monodromy around the boundary. Unfortunately,
this is, a priori, technically vacuous,
and so we make,

\begin{defn}\label{defn:pullback}
A smooth quasi projective curve is a smooth projective
curve $C$ together with a reduced divisor $D$. Exactly
as per \ref{defn:euler} the topological Euler characteristic
of $C\backslash D$ is equally the negative of the
log-canonical degree $K_C+D$. 
For $\D=\sum_{i}\D_i$ an
effective Weil divisor on a champ
$\cX$, and
$f:C\to\cX_I$ respecting the
convention \ref{conv:X_I}, the induced
quasi-projective curve is $C$ together
with the unique reduced divisor whose
support coincides with $f^*\D_J$. 
\end{defn}

%
\subsection{Dimension counts}\label{SS:dimcounts}
%


We put ourselves in the situation
of \ref{conv:X_I}, including, by the way,
a possibly empty set of indices $I$. 
We wish to study $\Mor(C,\cX_I)$.
As it happens,
the Hilbert champ of a champ is known to exist, \cite{HR},
so the aforesaid space of morphisms may viewed as
a sous-champ of a connected component of the graph.
This is not, however, in anyway necessary since
$f^*\cP^{\infty}_{\cX_I}$ is a sheaf of admissible
$\cO_C$ algebras,
\cite{formal},
whose formal spectrum is the
completion of $C\times\cX_I$ in the graph. As
such the local theory is wholly (formal) scheme like,
and one could just as well define $\Mor(C,\cX_I)$
as an open subscheme of the Chow scheme of the
moduli. In any case, this object is known to
exist, and thinking of a quasi-projective curve
as a champ with infinite monodromy on the boundary,
one realises that the relevant object of study is,

\begin{defn} Let things be as above, then 
the sous-champ,
$$\Mor(C,\cX_I,\D_J)\subset \Mor(C,\cX_I)$$ is the parameter space of morphisms $h:C\to \cX_I$ such that $h^*(\D_J)=f^*(\D_J)$. 
In addition, bearing in mind both equation (\ref{eq:K_X_I}) and 
the convention \ref{conv:X_I}, we say, cf. \cite[Definition 3.1]{Kbug},
\begin{enumerate}
\item[(i)] There are enough deformations of $f$ in $\cX_I$ if,
$$
 \dim_{[f]} \Mor(C,\cX_I)\geq 
 -K_{\cX_I}\cdot_{f}C +(1-g(C))\dim(\cX_I).
$$
and there are enough deformations in $\cX_I$ if this holds
for all $f:C\to\cX_I$.
\item[(ii)] There are enough deformations
of  $f$ in $\cX_I\backslash\D_J$ if,
$$
 \dim_{[f]} \Mor(C,\cX_I,\D_J)\geq 
 -( K_{\cX}+\D)\cdot_{f}C +(1-g(C))\dim(\cX_I).
$$
and there are enough deformations in  $\cX_I\backslash\D_J$ if this holds
for all $f:C\to\cX_I$.
\end{enumerate}
\end{defn}



As in the compact case, we will need to consider curves passing through a fixed point.
\begin{defn}
Conventions enforce, let $f:C\to \cX_I$ be a curve and $\Gamma\subset C$ a finite subset such that $\Gamma\cap f^*\D_J=\emptyset$. The sous-champ
$\Mor(C,\cX_I,\D_J, f_{|\Gamma})\subset \Mor(C,\cX_I,\D_J)$ is the parameter space of morphisms $h:C\to \cX_I$ which furthermore verify $h_{|\Gamma}=f_{|\Gamma}$.
Manifestly if $f(\Gamma)$ is contained in the smooth locus
of $\cX_I$ then,
\begin{equation}\label{eq:gamma}
\dim_{[f]} \Mor(C,\cX_I,\D_J, f_{|\Gamma})\geq \dim_{[f]} \Mor(C,\cX_I,\D_J) 
-\vert\Gamma\vert \dim(\cX_I)
\end{equation}
\end{defn}

To which, we have the following minor variant of Mori's estimate,

\begin{lem}\label{lem:dim2}
Let things be as above then,
\begin{enumerate}
\item[(i)] If $\cX_I$ is LCI, and $f$ meets the smooth
locus of the same, then there are enough deformations
of $f$ in $\cX_I$. 
\item[(ii)] If there are enough deformations in
$\cX_I$ and $\D_J$ is Cartier where it meets $f$,
then there are enough deformations in $\cX_I\backslash\D_J$.
\end{enumerate}
In particular if  $\cX$ is smooth, then there are
always enough deformations in $\cX\backslash\D$, and if
$(\cX,\D)$ is smooth
with snc boundary then there are even enough deformations
in $\cX_I\backslash\D_J$ for every stratum.
\end{lem}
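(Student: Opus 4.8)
The plan is to establish this as an enhancement of Mori's classical dimension estimate, transposed to the champ/LCI setting, by following the same deformation-theoretic template: bound the dimension of the space of morphisms from below by an Euler-characteristic computation, subtract off the obstructions, and then feed through the constraints imposed by the boundary divisor $\D_J$. I would treat the three claims in sequence, since (ii) is built upon (i), and the final ``in particular'' sentence is a formal corollary of applying (i) and (ii) under the smoothness hypotheses.

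For \textbf{(i)}, the strategy is the standard deformation count for maps to a local complete intersection. Writing $f:C\to\cX_I$ meeting the smooth locus, the tangent space to $\Mor(C,\cX_I)$ at $[f]$ is $\mathrm{H}^0(C, f^*T_{\cX_I})$ and the obstructions lie in $\mathrm{H}^1(C, f^*T_{\cX_I})$, so the local dimension is at least $\chi(C, f^*T_{\cX_I}) = \deg f^*T_{\cX_I} + (1-g(C))\operatorname{rk} = -K_{\cX_I}\cdot_f C + (1-g(C))\dim(\cX_I)$, by Riemann--Roch, which is precisely the required inequality. The point that needs the LCI hypothesis and the condition that $f$ meets the smooth locus is that $f^*T_{\cX_I}$ must be a genuine locally free sheaf of the correct rank: for an LCI target, the cotangent complex is concentrated so that $T_{\cX_I}$ behaves like a vector bundle along the smooth locus that $f$ generically hits, and the standard argument (as in the scheme case, e.g. the reference to \cite{Kbug}) carries over because the deformation theory of a map from an honest curve to a champ is, by the discussion preceding the lemma and by \ref{prop:courbes}, no more complicated than the schematic case. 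This is where I expect the only genuine subtlety to lie --- verifying that the champ structure on $\cX_I$ does not corrupt the Euler-characteristic bookkeeping --- but it is controlled precisely by the tameness and by $f$ meeting the smooth locus.

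For \textbf{(ii)}, I would impose the constraint $h^*(\D_J) = f^*(\D_J)$ that cuts out $\Mor(C,\cX_I,\D_J)$ inside $\Mor(C,\cX_I)$. Where $\D_J$ is Cartier along $f$, this is locally a condition of codimension at most $\D_J\cdot_f C$ in the source space: fixing the pullback divisor imposes vanishing of the appropriate sections, and each such condition drops the dimension by at most the corresponding intersection number. Thus
$$
\dim_{[f]}\Mor(C,\cX_I,\D_J)\ \ge\ \dim_{[f]}\Mor(C,\cX_I)\ -\ \D_J\cdot_f C,
$$
and combining with (i) and the identity $K_{\cX_I}+\D_J = (K_{\cX}+\D)_{|\cX_I}$ from equation (\ref{eq:K_X_I}) and \ref{conv:X_I}, the right-hand side becomes $-(K_{\cX}+\D)\cdot_f C + (1-g(C))\dim(\cX_I)$, which is exactly the bound defining enough deformations in $\cX_I\backslash\D_J$. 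The main thing to check here is that the codimension estimate is an \emph{upper} bound on the number of conditions, so that one subtracts no more than $\D_J\cdot_f C$; this again uses that $\D_J$ is Cartier where it meets $f$, so the incidence conditions are locally principal.

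Finally, for the \textbf{in particular} clause, if $\cX$ is smooth then $\cX=\cX_\emptyset$ is trivially LCI and every $f$ meets its smooth locus, so (i) applies, and with $\D$ Cartier along $f$ (being snc, or at least Cartier) part (ii) then gives enough deformations in $\cX\backslash\D$. Under the stronger hypothesis that $(\cX,\D)$ is smooth with snc boundary, each stratum $\cX_I = \bigcap_{i\in I}\D_i$ is itself smooth --- hence LCI --- by the system-of-parameters condition in the definition of snc, and $\D_J$ restricts to a Cartier divisor meeting $f$ (as $f$ respects \ref{conv:X_I}, so $f(C)\not\subset\D_j$ for $j\in J$); applying (i) to $\cX_I$ and then (ii) yields enough deformations in $\cX_I\backslash\D_J$ for every stratum, as claimed.
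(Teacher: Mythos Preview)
Your proposal is correct and follows essentially the same approach as the paper: case (i) is Mori's classical estimate via the tangent/obstruction Riemann--Roch count (the paper cites \cite{MoriOriginal}, \cite[expos\'e III, 5.1]{SGA1}, and \cite[II.1.3]{K} for exactly this), and case (ii) is the observation that fixing the pullback of $\D_J$ imposes at most $\D_J\cdot_f C$ conditions. The only point where the paper is slightly more precise is in invoking the functoriality of the obstruction group from SGA1 rather than your informal remark about the cotangent complex being concentrated, but this is a matter of phrasing rather than substance.
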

\begin{proof} Case (i) follows for the same reason as
\cite[Propostion 3]{MoriOriginal}, {\it i.e.} the
functoriality of the obstruction group \cite[expos\'e III, 5.1]{SGA1},
and the same calculation as \cite[II.1.3]{K}.
Case (ii) follows from case (i), since
fixing the intersection with the boundary is at most $\D_J\cdot_{f}C$ conditions.
\end{proof}

%
\section{Looking for rational curves}
%
%
\subsection{An instructive case}\label{SS:easy}
%

As ever we place ourselves in the
set up
\ref{defn:champ},
with $F_b:C\to\cX_I$ 
a one dimensional
family of morphisms
respecting the
convention \ref{conv:X_I},
{\it i.e.} we have a morphism,
$$
F:C\times B\to \cX_I
$$
where $B$ is a (not necessarily complete)
smooth curve. Suppose further that every
$\D_j$, $j\in J$, is $\mathbb{Q}$-Cartier, and
for some $c\in C$,
\begin{enumerate}
\item[(i)] $F(c\times B)$ is a point and $c\notin F_b^*(\D_J)$;
\item[(ii)] for any $b\in B$ the pull-back of the boundary $F_b^*(\D_J)$ is constant on $C=C\times b$;
\item[(iii)] $F$ is generically finite.
\end{enumerate}
Let $\overline B$ be a smooth compactification of $B$. We denote by $f$ the 
composition $\pi F$,
as well as the induced rational map:
$$
f:C\times \overline B \dashrightarrow X_I.
$$
By (i) this cannot be defined on all of
$c\times\overline B$.
Let $X$ be projective and $S$ a smooth surface 
obtained by a sequence of blow ups in closed
points such that the rational map
 $\overline{f}:S\to X_I$ is a morphism, and: 
\begin{enumerate}
\item[(iv)] $\overline{f}$ is
a smooth and minimal, i.e. it does not contract any $(-1)$-curve.
\end{enumerate} 

Let $E=\sum_{\alpha} e_{\alpha} E_{\alpha}$ be the exceptional divisor in $S$
over some indeterminancy in $c\times\overline B$. 
Notice that by (ii) the divisor $\overline{f}^*\D_J$ is supported either on 
$(\overline{f_0}^*\D_J)\times \overline{B}$, 
some fixed $0\in B$,
or on the components of the exceptional divisor. Therefore any component $E_a$ 
of $E$ such that $\overline{f}(E_{\alpha})\not\subset\D_J$ can only meet $\D_J$ in another component 
$E_{\beta}$ such that $\overline{f}(E_{\beta})\subset \D$ or along the proper transform 
of the fibre, 
$C\times\{b\}$, 
through the indeterminacy should
this be contained in $\overline{f}^*\D$. 
Summarising for strata, yields:

\begin{lem}\label{lem:int} 
Let everything be as above, with $E_\alpha$  a component  of the exceptional 
divisor of  $\overline{f}$. Suppose $\overline{f}(E_\alpha)\subset X_{I'}$, 
with $I'\supset I$ maximal. Then $E_\alpha$ can only meet 
$\D_j,\ j\notin I',$ in another component 
$E_\beta$ or along the proper transform $\tilde C_b$
of 
a fibre, $C\times\{b\}$, through an indeterminacy,
and this only if $\overline{f}(E_\beta)\subset \D_{J'}$ or 
$\overline{f}(\tilde C_b)\subset \D_{J'}$.
\end{lem}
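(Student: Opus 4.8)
The plan is to deduce the stratified statement by refining, one boundary component at a time, the structural description of $\supp\overline{f}^*\D_J$ recorded in the paragraph immediately preceding the lemma, namely that this support is carried by the constant horizontal sections in $(\overline{f_0}^*\D_J)\times\overline B$, by the exceptional components, or by the proper transform of the fibre through the indeterminacy. So the proof is essentially bookkeeping: track into which $\D_j$ each relevant component maps, and eliminate the horizontal sections using hypothesis (i).

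First I would fix an index $j\notin I'$ and examine the effective divisor $\overline{f}^*\D_j$ on $S$. Since $I'\supseteq I$ is \emph{maximal} with $\overline{f}(E_\alpha)\subset X_{I'}$, the image $\overline{f}(E_\alpha)$ is not contained in $\D_j$, whence $E_\alpha$ is not a component of $\supp\overline{f}^*\D_j$ and $E_\alpha\cap\supp\overline{f}^*\D_j$ is a finite set of points, each lying on some component $D\neq E_\alpha$ of that support. The entire content of the lemma is then the classification of those $D$ that can actually pass through the exceptional locus over the chosen indeterminacy $x_0\in c\times\overline B$.

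Next I would run through the three possibilities for such a $D$ afforded by the preceding discussion. If $D$ is exceptional, say $D=E_\beta$ with $\beta\neq\alpha$, then $\overline{f}(E_\beta)\subset\D_j$, and as $j\notin I'$ forces $j\in J'$ this gives $\overline{f}(E_\beta)\subset\D_{J'}$, as required. If $D$ is the proper transform $\tilde C_b$ of the fibre $C\times\{b\}$ through $x_0$, then its being a component of $\overline{f}^*\D_j$ means precisely $\overline{f}(\tilde C_b)\subset\D_j\subseteq\D_{J'}$, again as required. The decisive case is $D$ a (proper transform of a) horizontal section $\{p\}\times\overline B$ with $p\in\overline{f_0}^*\D_j$: here hypothesis (i) intervenes, since $c\notin F_b^*(\D_J)$ forces $p\neq c$, so $\{p\}\times\overline B$ is disjoint from $c\times\overline B$ and its proper transform never reaches the exceptional locus over $x_0$, hence cannot meet $E_\alpha$. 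Thus this case contributes no intersection point, and assembling the surviving two cases yields exactly the conclusion.

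The step I would treat most carefully — and the only genuine obstacle — is the verification that condition (ii) really does pin the non-exceptional, non-fibre part of $\overline{f}^*\D_J$ to the \emph{constant} horizontal sections $\{p\}\times\overline B$ with $p$ ranging over the fixed divisor $\overline{f_0}^*\D_J$. Without this rigidity a new horizontal component through $c$ could appear and survive the blow-ups, and the exclusion via (i) would no longer be exhaustive. Once the support is understood this rigidly on the smooth surface $S$, where proper transforms and their incidences behave transparently, the remaining strata bookkeeping (that $j\notin I'$ is equivalent to $j\in J'$, and that the swallowed components land in $\D_{J'}$) is purely formal.
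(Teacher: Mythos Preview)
Your proof is correct and follows essentially the same approach as the paper: the paper derives the lemma directly from the observation in the preceding paragraph that, by (ii), $\overline{f}^*\D_J$ is supported on the horizontal sections $(\overline{f_0}^*\D_J)\times\overline B$ together with exceptional components (and possibly the boundary fibre), the horizontal part being kept away from $E_\alpha$ by (i), and then simply remarks that ``summarising for strata'' gives the lemma. Your write-up is a more explicit unpacking of exactly this reasoning, including the case analysis and the use of maximality of $I'$ to pass from $\D_J$ to $\D_{J'}$.
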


We will consider the dual graph associated to 
$E$ together with
the proper transform $\tilde C$ of $C\times b$,  
where the graph shall be rooted, and we endow it with the metric 
given by the distance from the same. Observe that a vertex at maximal distance from the root 
has valency one.

\begin{lem}\label{lem:valency1} 
Let $v$ be a vertex having valency one, and denote by $E_v$ the corresponding component of the exceptional divisor. Suppose that 
\begin{equation}
{\textrm{ $E_v$ is not contracted by $\overline{f}$ to a point.}}
\end{equation}
Let $I'$ be the maximal set such that $\overline{F}(E_v)\subset X_{I'}.$
Then $\overline{f}(E_v)$ yields a 
map from $\mathbb{A}^1$ to
$ X_{I'}\backslash \D_{J'}$.
\end{lem}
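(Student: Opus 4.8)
The plan is to show that the rational curve $\overline{f}(E_v)$ meets the boundary $\D_{J'}$ in at most one point, whence deleting that point from the source realises the required $\mathbb{A}^1$. First I would record that, since $S$ is obtained from $C\times\overline{B}$ by blowing up closed points, each component of the exceptional locus, and in particular $E_v$, is a smooth rational curve $\cong\mathbb{P}^1$; as $E_v$ is not contracted by $\overline{f}$, the restriction $\overline{f}|_{E_v}\colon\mathbb{P}^1\to X_{I'}$ is a non-constant morphism onto $\overline{f}(E_v)$.

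By the very maximality of $I'$, the image $\overline{f}(E_v)$ is not contained in $\D_j$ for any $j\notin I'$, so $\overline{f}|_{E_v}^{-1}(\D_{J'})$ is a finite (possibly empty) subset of $E_v$. I would then argue that this finite set has at most one element. Indeed, at any point $p\in E_v$ with $\overline{f}(p)\in\D_j$, $j\notin I'$, the curve $E_v$ meets $\D_j$, so by Lemma \ref{lem:int} the point $p$ lies either on a second exceptional component $E_\beta$ with $\overline{f}(E_\beta)\subset\D_{J'}$, or on the proper transform $\tilde C_b$ of a fibre with $\overline{f}(\tilde C_b)\subset\D_{J'}$. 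In either case $p$ is an intersection point of $E_v$ with another vertex of the rooted dual graph of $E\cup\tilde C$. Since $v$ has valency one, $E_v$ meets exactly one such component, so every point of $\overline{f}|_{E_v}^{-1}(\D_{J'})$ coincides with this unique intersection point; that is, $\overline{f}|_{E_v}^{-1}(\D_{J'})$ consists of at most one point $p_0$.

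Consequently $\overline{f}$ carries $E_v\setminus\{p_0\}\cong\mathbb{A}^1$ into $X_{I'}\setminus\D_{J'}$, and as this map is non-constant it is the sought map from $\mathbb{A}^1$. Should $\overline{f}|_{E_v}^{-1}(\D_{J'})$ be empty the image already lies in $X_{I'}\setminus\D_{J'}$, and one restricts to any $\mathbb{A}^1\subset\mathbb{P}^1$.

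I expect the only point requiring real care to be the middle step: verifying that no point of $E_v$ can map into $\D_{J'}$ except at the single valency-one neighbour. This is precisely where Lemma \ref{lem:int} does the work, since it rules out $E_v$ meeting the horizontal boundary components $(\overline{f_0}^*\D_J)\times\overline{B}$ anywhere off the configuration $E\cup\tilde C$; it thereby converts the purely combinatorial valency-one hypothesis into the geometric statement that $\overline{f}^*\D_{J'}$ meets $E_v$ in a single point, which is the crux of the argument.
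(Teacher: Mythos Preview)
Your proposal is correct and follows essentially the same approach as the paper: invoke Lemma~\ref{lem:int} to see that $E_v$ can meet $(\overline{f})^{-1}\D_{J'}$ only along the unique edge connecting $v$ to the rest of the tree, hence in at most one point, and then remove that point from $E_v\cong\mathbb{P}^1$. The paper's proof is simply a terser version of what you wrote.
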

\begin{proof}
Since the vertex has valency one, by Lemma \ref{lem:int} the component $E_v$ can meet $(\overline f)^{-1}\D_{J'}$ at most at the point corresponding to the edge which connects it to the rest of the tree. Therefore it yields 
at worst a map from $\mathbb{A}^1$ to
$X_{I'}\backslash\D_{J'}$.
\end{proof}

In light of Lemma \ref{lem:valency1} the natural path to follow is to contract 
the vertices of valency one whose image in $X$ is a point and analyse what happens on the new (singular) surface. The morphism  $F:C\times B\to \cX_I$ will then be used to obtain 
a parabolic champ inside $\cX_I$.
This will be undertaken in the following subsection and lead to Proposition \ref{prop:2}.

%
\subsection{The key algorithms}\label{SS:alg}
%
We keep the notation of the previous section \S \ref{SS:easy}. We set $S_0:=S$. When no confusion is possible, by abuse of language, we will often identify a vertex to the corresponding irreducible component and speak, for instance, of contraction or image of a vertex. 

We construct inductively two families of surfaces according to the following algorithms: 

\begin{alg}[Strong rule]\label{alg:strongrule}
The surface $S_{n+1}$ is constructed by contracting all the vertices of valency one in $S_n$ whose proper transform in $S$ is not a $(-1)$-curve.
\end{alg}

\begin{alg}[Weak rule]\label{alg:weakrule}
The surface $W_{n+1}$, $W_0=S_0$, is constructed by contracting all the vertices of valency one in $W_n$ whose image in $X$ is a point.
\end{alg}

\begin{rmk}\label{rmk:minimality}
 {\rm By the minimality of the resolution no vertex of the graph whose corresponding component is contracted by $\overline f$ can be a $(-1)$-curve.\hfill$\square$}
\end{rmk}

\begin{prop}\label{prop:1}
Let everything be as in \S \ref{SS:easy}.
Consider the dual graph associated to the exceptional divisor of $S\to C\times \overline B$, union proper transform $\tilde C$ of $C\times b$,  rooted at  $\tilde C$, and endowed with the metric given by the distance from the root.
Let $S_n$ be a surface obtained via the algorithm \ref{alg:strongrule}. Then
\begin{enumerate}
\item[(i)] the induced graph  on $S_n$ is a tree;
\item[(ii)] any one-dimensional fibre of $S\to S_n$ is a chain;
\item[(iii)] any vertex 
in $S_n$ viewed as a vertex in the graph on $S$ meets at most one contracted connected component.
\item[(iv)] $S_n$ has at worst quotient singularities.
\end{enumerate}
\end{prop}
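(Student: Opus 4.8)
The plan is to prove the four assertions by induction on $n$, the base case $n=0$ being handled by the general structure of the minimal resolution $S\to C\times\overline{B}$ of the rational map and the fact that blowing up a smooth surface in closed points produces a tree of rational curves whose total preimage, together with the proper transform $\tilde{C}$, forms a connected graph. The key observation that makes all four statements propagate simultaneously is that \emph{contracting a valency-one vertex of a tree leaves a tree}, and more precisely that the strong rule only contracts leaves, so the combinatorics of the graph on $S_{n+1}$ is obtained from that on $S_n$ by deleting a set of leaves.

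First I would set up the inductive hypothesis carefully, treating (i) and (iii) as essentially graph-theoretic and (ii), (iv) as geometric refinements. For (i): assuming the graph on $S_n$ is a tree, the surface $S_{n+1}$ is obtained by contracting valency-one vertices, i.e.\ leaves; removing leaves from a tree yields a tree, and the image of a contracted leaf becomes a smooth point through which the adjacent vertex now passes, so no cycle is created. For (ii): a one-dimensional fibre of $S\to S_n$ is the connected bunch of curves contracted to a single point of $S_n$; I would argue that by (iii) at each stage such a bunch is attached to the rest along a single vertex, and that iterating the leaf-contraction forces each such fibre to be linearly ordered, hence a chain, since branching would require a vertex meeting two distinct contracted components, contradicting (iii). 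For (iii): the point is that the strong rule contracts, at each step, a disjoint union of leaves, and a vertex surviving to $S_n$ can have acquired at most one contracted component in its infinitely-near neighbourhood precisely because of the tree structure of (i); a vertex meeting two contracted connected components would, after contraction, be a branch point of two chains, violating the chain property (ii) one level up. The three combinatorial statements (i)--(iii) are therefore best proved as a single simultaneous induction, each feeding the next.

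For (iv), the quotient-singularity assertion, I would invoke the classical fact that contracting a chain of smooth rational curves on a smooth surface, when the chain has suitable negativity, produces a cyclic (Hirzebruch--Jung) quotient singularity. Concretely, by (ii) every one-dimensional fibre of $S\to S_n$ is a chain of rational curves on the smooth surface $S$, and by Remark \ref{rmk:minimality} none of the contracted components that map to a point under $\overline{f}$ is a $(-1)$-curve; together with the strong rule's stipulation that we only contract vertices whose proper transform in $S$ is \emph{not} a $(-1)$-curve, the self-intersections of the contracted chain are all $\le -2$, so the contraction of each such chain gives exactly a cyclic quotient singularity. Hence $S_n$ has at worst quotient (indeed cyclic) singularities.

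The main obstacle I anticipate is the bookkeeping in (iii), namely controlling how the valency of a surviving vertex changes under contraction and ensuring that ``at most one contracted connected component'' is genuinely preserved rather than merely ``finitely many''; this is where the interplay with (i) and (ii) is most delicate, since the claim is really that the contracted locus over any point of $S_n$ stays a single chain rather than fragmenting or branching. A secondary technical point is verifying the negativity hypothesis feeding (iv): one must be sure that the strong rule never contracts a $(-1)$-curve and that the curves contracted by $\overline{f}$ to a point are handled by Remark \ref{rmk:minimality}, so that the two sources of contracted curves both contribute only $\le -2$ components and the Hirzebruch--Jung criterion applies. Once (i)--(iii) are in place, (iv) should follow formally from the standard contraction criterion for chains of rational curves.
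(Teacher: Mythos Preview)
Your reductions are the same as the paper's --- (i) and (iv) from (ii), and (ii) from (iii) --- and your treatment of (i), (ii), (iv) given (iii) is fine. The gap is in (iii). The sketch you give is circular: you justify (iii) by saying that a vertex meeting two contracted components would ``violate the chain property (ii) one level up'', but (ii) is exactly what you are deducing from (iii). Nor does the tree property (i) help: a tree may have vertices of arbitrarily high valency, so nothing prevents a surviving vertex $v$ from being adjacent in $S_n$ to several leaves, each of which gets contracted at the next step and becomes its own contracted component meeting $v$. In particular the number of contracted components meeting a fixed vertex can strictly increase along the algorithm, so (iii) does not propagate by induction on $n$; already the step $S_0\to S_1$ is nontrivial and your base case (where nothing is yet contracted) gives no leverage.

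The paper's argument supplies the missing idea and does not use induction on $n$ at all. If $v$ meets $s$ contracted components then in the dual graph on $S$ it has valency at least $s+1$: one edge into each contracted component, and one more along the unique path to the root $\tilde C$. Now run the blow-ups $S\to C\times\overline B$ backwards, contracting $(-1)$-curves in the reverse order. At the step where $v$ itself is blown down it is the exceptional curve of a single point blow-up on a smooth surface, hence meets at most two of the curves present at that stage, i.e.\ has valency $\le 2$. One checks that reversing the blow-ups never destroys the $s+1$ edges in question, so $s+1\le 2$. This is a statement about iterated point blow-ups, independent of the contraction algorithm, and is precisely the ``bookkeeping in (iii)'' you anticipated as the main obstacle.
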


\begin{proof}[Proof of Proposition \ref{prop:1}]
Items (i) \& (iv) follows from item (ii) and from the type of algorithm we are using. 
Item (ii) follows from item (iii).

We are then reduced to prove item (iii). 
Let $v$ be a vertex, meeting $s$ 
contracted connected component. 
In particular viewed as a vertex in $S$ it has valency
at least $s+1$, {\it i.e.} $s$ edges for the contracted
components, and one giving the unique path to the root.
Now consider undoing the procedure whereby $S$ was
obtained from $C\times \overline B$, {\it i.e.} contract $-1$
curves in the reverse order. By construction this
never destroys any of the above edges in the induced
graph, but our vertex of interest eventually becomes
a $-1$-curve, so, $s+1\leq 2$.
\end{proof}

\begin{defn}\label{defn:W}
{\em By Remark \ref{rmk:minimality} the previous proposition also holds for $W_n$,
and we let $W$ be the conclusion of the algorithm.}
\end{defn}

Now denote by $\Gamma$ the graph of $F$
in $C\times\bar{B}\times\cX_I$, and $\vert \Gamma\vert$
its moduli, equivalently the graph of $f$, and
consider
the following diagram,
\begin{equation}\label{eq:graph} 
\xymatrix{
\cW\ar[r]\ar[d]&\Gamma\ar[d]\\
W\ar[r]&\vert\Gamma\vert
}
\end{equation}
where $\cW$ is the normalisation of the dominant component.
It is tame because
$\cX_I$ is, and an isomorphism over $C\times B$. Better still:
\begin{lem}\label{lem:purity}
Let $\cV\to W$ be the Vistoli covering champ of $W$,
then there is a smooth champ $\tilde\cW\to \cW$
over $W$ with trivial generic stabiliser such that:
$\tilde\cW\to\cV$ is the extraction of roots
of
components (irrespective of the order) of
a (possibly empty) snc divisor in $\cV$,
{\it i.e.} everywhere \'etale locally
$\cO_{\tilde\cW}=\cO_{\cV}[z]/(z^l=x)$, 
or $\cO_{\tilde\cW}=\cO_{\cV}[z,w]/(z^l=x,w^m=y)$
for
$x=0$, repectively $xy=0$, an equation of the 
reduced fibre over B, and $l,m\in\mathbb{N}$
prime to the characteristic and depending on
the component.
\end{lem}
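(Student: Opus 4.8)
The statement is \'etale-local on $W$ and compatible with \'etale base change in every ingredient---the Vistoli covering champ, the root extraction, and the asserted normal forms---so the plan is to produce the local models after a suitable \'etale cover of $W$. I would organise the argument around two principles: that the locus where $\cW\to W$ fails to be representable is a divisor (purity of the branch locus), and that along that divisor the tame stacky structure is an iterated extraction of roots (Abhyankar's lemma). Granting these, $\tilde\cW$ is then defined as the corresponding root stack over $\cV$, and the point is to check that it carries the advertised maps.

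First I would locate the stacky locus. Since $\cW\to\Gamma$ is an isomorphism over $C\times B$, the non-trivial isotropy of $\cW$ sits over the complement of $C\times B$ in $W$, namely the reduced fibre $D$ over $\overline B\setminus B$ together with the images of the contracted components. By Proposition \ref{prop:1} the induced graph on $W$ is a tree whose vertices each meet at most one contracted connected component, so $D$ is at worst nodal, hence an snc divisor on the quotient-singular surface $W$; moreover the singular points of $W$ all lie on $D$. Passing to the Vistoli covering champ $\cV\to W$, which is an isomorphism over the smooth locus of $W$ and resolves its quotient singularities, the preimage $\tilde D$ of $D$ is snc in the smooth champ $\cV$, and $\cV$ agrees with $\cW$ away from $\tilde D$.

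The heart of the matter is then the comparison of the two smooth tame champs $\cV$ and $\cW$ over their common moduli $W$. Working \'etale-locally over $\cV$ and writing the relevant cover as $[M/H]$ with $M$ smooth and $H$ finite of order prime to the characteristic, purity of the branch locus forces the non-trivial isotropy to be supported in pure codimension one, on $\tilde D$: any isolated fixed behaviour in codimension two is already recorded by $\cV$ and so contributes nothing to the discrepancy. Tameness makes the monodromy around each branch of $\tilde D$ cyclic of order prime to the characteristic, and Abhyankar's lemma puts the cover in the normal form $\cO_{\cV}[z]/(z^l-x)$ along a smooth branch $\{x=0\}$, respectively $\cO_{\cV}[z,w]/(z^l-x,w^m-y)$ at a node $\{xy=0\}$---the two cases being exactly those permitted by the nodality of $D$. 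Declaring $\tilde\cW$ to be the champ obtained from $\cV$ by these root extractions gives a smooth champ with $\tilde\cW\to\cV$ the asserted extraction of roots, and with trivial generic stabiliser, since $\cV$ already has one and root extraction adds isotropy only along $\tilde D$.

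It remains to exhibit $\tilde\cW\to\cW$ over $W$. The generic stabiliser of $\cW$ is the monodromy $G$ of $\cX_I$ along the dominant component, transported through $\Gamma$; by construction $\tilde\cW$ rigidifies this gerbe, and the universal property of the root extractions together with the identification of $\cV$ and $\cW$ over $W\setminus D$ produces the required morphism over $W$. The principal obstacle is the purity step: one must genuinely check that no non-trivial isotropy of $\cW$ hides in codimension two beyond what $\cV$ already carries, so that the whole discrepancy between $\cW$ and $\cV$ is exhausted by the divisorial root extractions. Here the smoothness of $\cW$ and the explicit tree-and-chain geometry furnished by Proposition \ref{prop:1}---in particular that a vertex meets at most one contracted component, so that at worst two branches of $\tilde D$ cross---are precisely what guarantee that the codimension-two contributions of $\cW$ and $\cV$ coincide and that only the one- and two-root local forms occur.
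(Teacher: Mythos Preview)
Your overall strategy matches the paper's: use Proposition~\ref{prop:1} to see that the relevant locus is snc in $\cV$, then invoke purity and the structure of tame covers of an snc divisor to obtain the root-extraction description. Your Abhyankar step is exactly the paper's citation of \cite[Exp.~XIII, Cor.~5.3]{SGA1}.

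The difference, and the place where your argument has a genuine gap, is in the construction of $\tilde\cW$ itself. You define $\tilde\cW$ as a root stack over $\cV$ and then need to manufacture the map to $\cW$; your appeal to ``the universal property of the root extractions together with the identification of $\cV$ and $\cW$ over $W\setminus D$'' does not do this---root stacks have a universal property for maps \emph{into} them, not out of them, and an isomorphism over a dense open does not extend for free across the stacky locus. The paper sidesteps the issue by taking $\tilde\cW$ (there written $\tilde\cV$) to be the normalisation of the dominant component of $\cW\times_W\cV$. This comes equipped with projections to both $\cW$ and $\cV$ by construction; since each of $\cW$ and $\cV$ contains $C\times B$ as an open set, the projection to $\cV$ is generically an isomorphism with pure tame ramification over the snc fibre, and the SGA1 result then identifies that projection as a root extraction. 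One further correction: $\cW$ already has trivial generic stabiliser---the paper notes just before diagram~\eqref{eq:graph} that $\cW\to W$ is an isomorphism over $C\times B$---so there is no gerbe to rigidify and your paragraph about the monodromy $G$ of $\cX_I$ is superfluous (and your later invocation of ``the smoothness of $\cW$'' is not justified; it is only normal, which is another reason to pass to $\cV$ before applying purity).
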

\begin{proof}
By \ref{prop:1} (iv), the Vistoli covering champ
$\cV\to W$ exists, and the pre-image of any fibre
over $B$ is a snc divisor. Both $\cV$ and $\cW$
contain $C\times B$ as an open set, so letting
$\tilde\cV$ be the normalisation of the dominant
component of $\cW\times_W\cV$, the champ
$\tilde\cV$ has pure ramification over a 
simple normal crossing divisor. The tame 
fundamental group of a simple normal
crossing divisor is what one expects,
\cite[Exp. XIII, Cor. 5.3]{SGA1}, so,
in particular away from a crossing of components
the extraction of a root as above for some
$l_k$ depending on the component $k$, and
taking $l_k$th roots in each component
individually, irrespective of any order,
yields $\tilde\cW$.
\end{proof}

The promised generalisation of Lemma \ref{lem:valency1} may now be given. 

\begin{prop}\label{prop:2} 
Every irreducible component of the exceptional divisor in $\tilde\cW$ corresponding to the vertices of valency one in $W$ yields a parabolic champ in $\cX_{I'}$, for some $I'\supseteq I$.
\end{prop}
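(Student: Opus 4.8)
The plan is to reduce the claim to Lemma \ref{lem:valency1} by transporting the valency-one geometry from $W$ up to the smooth champ $\tilde\cW$ furnished by Lemma \ref{lem:purity}. First I would fix a vertex $v$ of valency one in $W$ and let $E_v$ denote the corresponding component of the exceptional divisor; write $I'\supseteq I$ for the maximal set with $\overline f(E_v)\subset X_{I'}$. The key point is that valency one in $W$ means, by Lemma \ref{lem:int} applied to the graph on $S$ (and respected through the contractions of Algorithm \ref{alg:strongrule}/\ref{alg:weakrule}, via Proposition \ref{prop:1}(iii)), that $E_v$ can meet the boundary pullback $(\overline f)^{-1}\D_{J'}$ in at most the single point corresponding to the edge joining $v$ to the rest of the tree. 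Thus on the scheme level $\overline f(E_v)$ already yields at worst a map from $\mathbb{A}^1$ into $X_{I'}\backslash\D_{J'}$, exactly as in Lemma \ref{lem:valency1}; what remains is to lift this to the champ $\cX_{I'}$ and verify parabolicity in the stacky sense of Definition \ref{defn:euler}.

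Next I would use the diagram (\ref{eq:graph}) together with Lemma \ref{lem:purity}. Since $\cW\to W$ is the normalisation of the dominant component of the graph and is tame (being an isomorphism over $C\times B$), and since $\tilde\cW\to\cV$ is obtained from the Vistoli covering champ merely by extracting roots along the reduced fibre over $B$, the preimage $\tilde E_v\subset\tilde\cW$ of $E_v$ is a smooth champ over the curve $E_v\cong\mathbb{P}^1$ whose only possible non-schematic points lie over the intersections of $E_v$ with the fibre divisor $x=0$ (or $xy=0$). By the valency-one hypothesis there is at most one such crossing point meeting $\D_{J'}$, so $\tilde E_v$ is a smooth champ over $\mathbb{P}^1$ with non-schematic locus concentrated at finitely many points, at most one of which lies in the boundary $\D_{J'}$. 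Removing that boundary point leaves a champ whose coarse moduli is $\mathbb{A}^1$.

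Finally I would compute the Euler characteristic via Definition \ref{defn:euler} to conclude parabolicity. Writing the complement champ as $\cC_0\to\mathbb{A}^1$ (after the fibration killing any generic stabiliser), one has $\chi(\mathbb{A}^1)=1$ and the stacky correction $\sum_c(\tfrac{1}{n_c}-1)$ is a sum over the finitely many non-schematic points, each contributing a term in $(-1,0]$; the single point removed in passing to $\mathbb{A}^1$ is precisely the one that would otherwise have cost a full unit. Invoking Remark \ref{rmk:curves}, a champ over $\mathbb{A}^1$ is parabolic iff it admits a map from $\mathbb{A}^1$, which is supplied by Proposition \ref{prop:courbes}(c); this gives the desired map from $\mathbb{A}^1$ into the stratum, i.e.\ a parabolic champ in $\cX_{I'}$. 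The main obstacle I anticipate is the bookkeeping in the second step: one must check carefully that the root-extraction of Lemma \ref{lem:purity} does not introduce extra stabiliser points of $\tilde E_v$ lying inside $\D_{J'}$ beyond the single permitted edge-point, and that the identification of the coarse complement with $\mathbb{A}^1$ survives the normalisation $\cW$ and the passage to $\tilde\cW$ — in other words, that taking roots componentwise (irrespective of order) is genuinely harmless along the one curve $E_v$ we care about.
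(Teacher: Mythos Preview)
There is a genuine gap in your third paragraph, and it is precisely the step you flag as the ``main obstacle'' but then do not resolve. You correctly arrive at a champ $\cE_v$ over $\mathbb{P}^1$ and, after removing the single edge-point where it meets the rest $\cR$ of the fibre, at a champ with coarse moduli $\mathbb{A}^1$. But you never establish that the stacky correction $\sum_c(\tfrac{1}{n_c}-1)$ involves \emph{at most one} term. Without this, $\chi(\cE_v\backslash\cR)$ can perfectly well be $\le 0$: already two points of order $\ge 2$ on a champ with moduli $\mathbb{A}^1$ give $\chi\le 0$. Your attempted rescue via Proposition~\ref{prop:courbes}(c) is circular: item~(c) is the conclusion in the case $\chi(\cC)>0$, so it presupposes exactly what you are trying to prove. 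Likewise, Remark~\ref{rmk:curves} says a champ is parabolic iff it is \emph{dominated} by $\mathbb{A}^1$; having $\mathbb{A}^1$ as coarse moduli does not supply such a domination.

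The paper closes this gap by an explicit count coming from Proposition~\ref{prop:1}(iii)--(iv) together with Lemma~\ref{lem:purity}. Since $v$ has valency one in $W$, the curve $E_v$ meets $\cR$ in a single point and, by (iii), carries at most one contracted chain from $S\to W$; by (iv) this is the unique possible singular point of $W$ on $E_v$, hence the unique point where the Vistoli cover $\cV$ has a nontrivial stabiliser (of some order $n$) over $E_v$. The root extraction of Lemma~\ref{lem:purity} then contributes only a generic stabiliser of order $l$ along $\cE_v$ (and modifies the crossing point with $\cR$, which we remove anyway). Thus $\cE_v\backslash\cR$ has moduli $\mathbb{A}^1$, generic stabiliser of order $l$, and \emph{at most one} further orbifold point of order $n$, whence
\[
\chi(\cE_v\backslash\cR)=\tfrac{1}{l}\bigl(1-(1-\tfrac{1}{n})\bigr)=\tfrac{1}{ln}>0,
\]
or $\chi=1/l$ if no contracted chain occurs. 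This is the missing ingredient: you must invoke Proposition~\ref{prop:1}(iii) to bound the number of non-schematic points, not merely Lemma~\ref{lem:int} to locate the boundary intersection.
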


\begin{proof}
Let $v$ be a valency one vertex in $W$,
$\cE_v$ the corresponding irreducible 
component of the exceptional divisor in $\tilde\cW$,
with
$\tilde F$ the morphism from $\tilde\cW$ to $\cX$, and $I'\supseteq I$ 
maximal amongst sets
such that  $\tilde F(\cE_v)\subset \cX_{I'}$.
By Proposition \ref{prop:1}, the component $\cE_v$ meets 
the rest, $\cR$, of the fibre of $\tilde \cW$ over $B$
in at most one point. By \ref{lem:purity}, the picture
is as follows:


\psset{xunit=1.0cm,yunit=1.0cm,algebraic=true,dotstyle=o,dotsize=3pt 0,linewidth=0.8pt,arrowsize=3pt 2,arrowinset=0.25}
\begin{pspicture*}(-4.3,-2.66)(11.7,5.8)

\parametricplot[linecolor=blue]{0.0}{3.141592653589793}{1*0.1*cos(t)+0*0.1*sin(t)+-3.4|0*0.1*cos(t)+1*0.1*sin(t)+1.22}

\parametricplot[linecolor=blue]{3.141592653589793}{6.283185307179586}{1*0.1*cos(t)+0*0.1*sin(t)+-3.2|0*0.1*cos(t)+1*0.1*sin(t)+1.22}

\parametricplot[linecolor=blue]{0.0}{3.141592653589793}{1*0.1*cos(t)+0*0.1*sin(t)+-3|0*0.1*cos(t)+1*0.1*sin(t)+1.22}

\parametricplot[linecolor=blue]{3.141592653589793}{6.283185307179586}{1*0.1*cos(t)+0*0.1*sin(t)+-2.8|0*0.1*cos(t)+1*0.1*sin(t)+1.22}

\parametricplot[linecolor=blue]{0.0}{3.141592653589793}{1*0.1*cos(t)+0*0.1*sin(t)+-2.6|0*0.1*cos(t)+1*0.1*sin(t)+1.22}

\parametricplot[linecolor=blue]{3.141592653589793}{6.283185307179586}{1*0.1*cos(t)+0*0.1*sin(t)+-2.4|0*0.1*cos(t)+1*0.1*sin(t)+1.22}

\parametricplot[linecolor=blue]{0.0}{3.141592653589793}{1*0.1*cos(t)+0*0.1*sin(t)+-2.2|0*0.1*cos(t)+1*0.1*sin(t)+1.22}

\parametricplot[linecolor=blue]{3.141592653589793}{6.283185307179586}{1*0.1*cos(t)+0*0.1*sin(t)+-2|0*0.1*cos(t)+1*0.1*sin(t)+1.22}

\parametricplot[linecolor=blue]{0.0}{3.141592653589793}{1*0.1*cos(t)+0*0.1*sin(t)+-1.8|0*0.1*cos(t)+1*0.1*sin(t)+1.22}

\parametricplot[linecolor=blue]{3.141592653589793}{6.283185307179586}{1*0.1*cos(t)+0*0.1*sin(t)+-1.6|0*0.1*cos(t)+1*0.1*sin(t)+1.22}

\parametricplot[linecolor=blue]{0.0}{3.141592653589793}{1*0.1*cos(t)+0*0.1*sin(t)+-1.4|0*0.1*cos(t)+1*0.1*sin(t)+1.22}

\parametricplot[linecolor=blue]{3.141592653589793}{6.283185307179586}{1*0.1*cos(t)+0*0.1*sin(t)+-1.2|0*0.1*cos(t)+1*0.1*sin(t)+1.22}

\parametricplot[linecolor=blue]{0.0}{3.141592653589793}{1*0.1*cos(t)+0*0.1*sin(t)+-1.|0*0.1*cos(t)+1*0.1*sin(t)+1.22}

\parametricplot[linecolor=blue]{3.141592653589793}{6.283185307179586}{1*0.1*cos(t)+0*0.1*sin(t)+-0.8|0*0.1*cos(t)+1*0.1*sin(t)+1.22}

\parametricplot[linecolor=blue]{0.0}{3.141592653589793}{1*0.1*cos(t)+0*0.1*sin(t)+-0.6|0*0.1*cos(t)+1*0.1*sin(t)+1.22}

\parametricplot[linecolor=blue]{3.141592653589793}{6.283185307179586}{1*0.1*cos(t)+0*0.1*sin(t)+-0.4|0*0.1*cos(t)+1*0.1*sin(t)+1.22}

\parametricplot[linecolor=blue]{0.0}{3.141592653589793}{1*0.1*cos(t)+0*0.1*sin(t)+-0.2|0*0.1*cos(t)+1*0.1*sin(t)+1.22}

\parametricplot[linecolor=blue]{3.141592653589793}{6.283185307179586}{1*0.1*cos(t)+0*0.1*sin(t)+-0.0|0*0.1*cos(t)+1*0.1*sin(t)+1.22}

\parametricplot[linecolor=blue]{0.0}{3.141592653589793}{1*0.1*cos(t)+0*0.1*sin(t)+0.2|0*0.1*cos(t)+1*0.1*sin(t)+1.22}

\parametricplot[linecolor=blue]{3.141592653589793}{6.283185307179586}{1*0.1*cos(t)+0*0.1*sin(t)+0.4|0*0.1*cos(t)+1*0.1*sin(t)+1.22}

\parametricplot[linecolor=blue]{0.0}{3.141592653589793}{1*0.1*cos(t)+0*0.1*sin(t)+0.6|0*0.1*cos(t)+1*0.1*sin(t)+1.22}

\parametricplot[linecolor=blue]{3.141592653589793}{6.283185307179586}{1*0.1*cos(t)+0*0.1*sin(t)+0.8|0*0.1*cos(t)+1*0.1*sin(t)+1.22}

\parametricplot[linecolor=blue]{0.0}{3.141592653589793}{1*0.1*cos(t)+0*0.1*sin(t)+1|0*0.1*cos(t)+1*0.1*sin(t)+1.22}

\parametricplot[linecolor=blue]{3.141592653589793}{6.283185307179586}{1*0.1*cos(t)+0*0.1*sin(t)+1.2|0*0.1*cos(t)+1*0.1*sin(t)+1.22}

\parametricplot[linecolor=blue]{0.0}{3.141592653589793}{1*0.1*cos(t)+0*0.1*sin(t)+1.4|0*0.1*cos(t)+1*0.1*sin(t)+1.22}

\parametricplot[linecolor=blue]{3.141592653589793}{6.283185307179586}{1*0.1*cos(t)+0*0.1*sin(t)+1.6|0*0.1*cos(t)+1*0.1*sin(t)+1.22}

\parametricplot[linecolor=blue]{0.0}{3.141592653589793}{1*0.1*cos(t)+0*0.1*sin(t)+1.8|0*0.1*cos(t)+1*0.1*sin(t)+1.22}

\parametricplot[linecolor=blue]{3.141592653589793}{6.283185307179586}{1*0.1*cos(t)+0*0.1*sin(t)+2|0*0.1*cos(t)+1*0.1*sin(t)+1.22}

\parametricplot[linecolor=blue]{0.0}{3.141592653589793}{1*0.1*cos(t)+0*0.1*sin(t)+2.2|0*0.1*cos(t)+1*0.1*sin(t)+1.22}

\parametricplot[linecolor=blue]{3.141592653589793}{6.283185307179586}{1*0.1*cos(t)+0*0.1*sin(t)+2.4|0*0.1*cos(t)+1*0.1*sin(t)+1.22}

\parametricplot[linecolor=blue]{0.0}{3.141592653589793}{1*0.1*cos(t)+0*0.1*sin(t)+2.6|0*0.1*cos(t)+1*0.1*sin(t)+1.22}

\parametricplot[linecolor=blue]{3.141592653589793}{6.283185307179586}{1*0.1*cos(t)+0*0.1*sin(t)+2.8|0*0.1*cos(t)+1*0.1*sin(t)+1.22}

\parametricplot[linecolor=blue]{0.0}{3.141592653589793}{1*0.1*cos(t)+0*0.1*sin(t)+3|0*0.1*cos(t)+1*0.1*sin(t)+1.22}
\parametricplot{3.141592653589793}{6.283185307179586}{1*0.1*cos(t)+0*0.1*sin(t)+3.2|0*0.1*cos(t)+1*0.1*sin(t)+1.22}

\parametricplot[linecolor=blue]{0.0}{3.141592653589793}{1*0.1*cos(t)+0*0.1*sin(t)+3.4|0*0.1*cos(t)+1*0.1*sin(t)+1.22}

\parametricplot[linecolor=blue]{3.141592653589793}{6.283185307179586}{1*0.1*cos(t)+0*0.1*sin(t)+3.6|0*0.1*cos(t)+1*0.1*sin(t)+1.22}

\parametricplot[linecolor=blue]{0.0}{3.141592653589793}{1*0.1*cos(t)+0*0.1*sin(t)+3.8|0*0.1*cos(t)+1*0.1*sin(t)+1.22}

\psline(-3.48,1.22)(3.96,1.18)
\psline[linestyle=dashed,dash=5pt 5pt](0.66,0.4)(2.32,3.28)
\psline[linestyle=dashed,dash=5pt 5pt](2.12,2.04)(0.88,4.5)
\psline[linestyle=dashed,dash=5pt 5pt](0.64,3.46)(2.4,5.8)
\psline(-2.2,1.92)(-2.86,-0.64)
\psline(-3.02,0.28)(-0.82,-2)
\psline(5.5,3.76)(6.44,3.76)
\begin{scriptsize}
\rput[bl](4.04,1.3){{$\mathcal E_v$}}

\rput[bl](2.56,5.44){{contracted chain}}
\psdots[dotsize=7pt 0,dotstyle=*,linecolor=red](1.12,1.2)

\rput[bl](-0.82,-1.62){{$\mathcal R$}}
\psdots[dotsize=7pt 0,dotstyle=*,linecolor=red](5.5,4.42)
\rput[bl](5.7,4.34){{$=$ point with stabiliser of order $n$}}
\rput[bl](6.56,3.7){$=$ generic stabiliser}
\rput[bl](6.86,3.4){of order $l$}
\parametricplot[linecolor=blue]{3.141592653589793}{6.283185307179586}{1*0.1*cos(t)+0*0.1*sin(t)+5.6|0*0.1*cos(t)+1*0.1*sin(t)+3.75}

\parametricplot[linecolor=blue]{0.0}{3.141592653589793}{1*0.1*cos(t)+0*0.1*sin(t)+5.8|0*0.1*cos(t)+1*0.1*sin(t)+3.75}

\parametricplot[linecolor=blue]{3.141592653589793}{6.283185307179586}{1*0.1*cos(t)+0*0.1*sin(t)+6|0*0.1*cos(t)+1*0.1*sin(t)+3.75}

\parametricplot[linecolor=blue]{0.0}{3.141592653589793}{1*0.1*cos(t)+0*0.1*sin(t)+6.2|0*0.1*cos(t)+1*0.1*sin(t)+3.75}
\end{scriptsize}
\end{pspicture*}

Or, more precisely, in the notation of the proof of
\ref{lem:purity}, $\tilde\cW\to\cV$ restricted to
$\cE_v\backslash\cR$ is an \'etale covering of degree
$l^{-1}$. Therefore: 
$$
\chi(\cE_v\backslash\cR) = \frac{1}{l}\cdot (2-1-(1-\frac{1}{n}))
=\frac{1}{ln}>0,
$$
where $l$ is the order of the stabiliser of the generic point $\cE_v$, 
and $n$ is the order of the stabiliser at the non-scheme like
point on the image in $\cV$ should 
a contracted chain occur-
otherwise $\chi=1/l$, and 
the proposition is proved.
\end{proof}

%
\section{Log Bend-and-Break}
%

Let us first consider some
ways in which
Abhyankar's theorem: any positive dimensional
component of a fibre of a birational map to
a smooth variety is ruled, fails for champs,
{\it i.e.},

\begin{ex}\label{ex:noAb}{\em
Let $X$ be a smooth surface. Let $X_p$ be the blow-up of a point $p\in X$ and $E$ the exceptional divisor. Let $Y$ be the blow-up of three points $q_1, q_2, q_3$ on $E$, $F_1, F_2, F_3$ the corresponding exceptional divisors, $\tilde E$ the proper transform of $E$ in $Y$ and $f:Y\to X$ the composite morphism. 
Taking $m_i\geq 3$th roots in the $F_i$ yields a champ $\rho:\cY\to Y$ 
over $Y$, proper and birational to $X$, but
$\rho^{-1}(\tilde E)$ is not 
parabolic.}
\end{ex}
Similarly, we have the following example of Campana,
in which we find further obstructions to 
Bend-and-break in the presence of a boundary.
\begin{ex}[Campana, \cite{C2}, Example 9.19]\label{ex:campana}{\em
Take an isotrivial family of smooth plane cubics $C_t$ passing through a point $o$ degenerating to a union of 3 lines $L_1, L_2, L_3$, only one of which, say $L_1$, passes through $o$. For instance one can take $C_t=x^3+y^3+tZ^3=0$ and $o=[1:-1:0]$. Let $X$ be the blow-up of two general points on $L_1$ distinct from $o$ and let $E_1, E_2$ be the corresponding exceptional divisors. Set $\D:=E_1+E_2$. Let $\tilde o\in X$ be the point over $o$. The proper transform $\tilde L_1$
is the only rational curve in the family through $\tilde o$, but it does 
not yield 
an $\mathbb{A}^1$ in $X\backslash\D$.}
\end{ex}

These and other difficulties limit how much one can improve \ref{prop:2}, {\it viz:}

\begin{prop}\label{prop:b&b}
Let everything be as in \ref{SS:easy} and suppose
further that the family fixes not just a point but a finite set,
$\emptyset\neq\Gamma\subset C$ complimentary to $f^*\supp(\D_J)$, 
then for $H$ a nef divisor on $X$ there is a parabolic
champ $\cL$ in some possibly different stratum 
$\cX_I'\backslash\D_J'$, $I'\supseteq I$, such that:
\begin{equation}\label{eq:degree}
 H\cdot \cL \leq \frac{2(H\cdot_f C)}{|\Gamma|}.
\end{equation}
If in addition,

\noindent (a) $\cX_{I}\backslash\D_J\to X_I\backslash\vert\D_J\vert $ is \'etale. 

\noindent (b) For any $I'\supset I$, no proper sub-stratum
$X_{I'}\backslash\vert\D_{J'}\vert$ of the moduli
contains a parabolic curve.

Then the parabolic champ $\cL$ of \ref{eq:degree} may be taken
to meet $f(\Gamma)$ and lie in the stratum $\cX_{I}\backslash\D_J$.
\end{prop}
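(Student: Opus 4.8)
The plan is to feed the family $F:C\times B\to\cX_I$ through the entire machinery of \S\ref{SS:easy}--\S\ref{SS:alg}: compactify $B$, resolve $f$ to a morphism $\bar f:S\to X_I$, run the weak rule \ref{alg:weakrule} to reach the surface $W$, and invoke Proposition \ref{prop:2}, by which every valency-one vertex of $W$ lifts, via the root extraction of \ref{lem:purity}, to a parabolic champ $\cE_v$ in some stratum $\cX_{I'}\backslash\D_{J'}$ with $I'\supseteq I$. The sought $\cL$ will be one such leaf, chosen of small degree. The only datum beyond \ref{prop:2} is that the family now fixes the whole finite set $\Gamma$ rather than the single point $c$, and the heart of the matter is to convert this into the numerical bound \eqref{eq:degree}.

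For that bound I would work on $S$ with $H_S:=\bar f^*H$, which is nef since $\bar f$ is a morphism. Every special fibre of $S\to\bar B$ is numerically equivalent to the general fibre $\tilde C$, so its total $H_S$-degree equals $H_S\cdot\tilde C=H\cdot_f C=:d$. Fixing $\Gamma$ means exactly that each vertical line $\{\gamma\}\times\bar B$ is contracted by $\bar f$ to the point $f(\gamma)$; hence its proper transform $\sigma_\gamma\subset S$ is a section of $S\to\bar B$ with $H_S\cdot\sigma_\gamma=0$, and for distinct $\gamma$ these sections are disjoint. Each $\sigma_\gamma$ meets the degenerating fibre in a single point $x_\gamma$ lying on the contracted ($H_S$-degree zero) part of the tree, and following that part outward from $x_\gamma$ charges $\gamma$ against a positive-degree leaf of $W$. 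The key quantitative step is then the Bend-and-Break pigeonhole on the rooted tree of \ref{prop:1}: the $|\Gamma|$ disjoint contracted sections distribute the degree $d$ among the positive-degree leaves, and because the tree structure and minimality (\ref{prop:1}, in particular (iii)) bound how many incidences a single leaf can absorb, one finds a leaf $\cE_v=\cL$ with $H\cdot\cL=H_S\cdot\cE_v\leq 2d/|\Gamma|$. That the constant $2$ here is not optimal is, as remarked after \ref{rmk:cone}, exactly what one should expect.

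Under the additional hypotheses (a) and (b) I would upgrade both the stratum and the incidence. For the stratum: $\cL$ is a genuine parabolic champ whose image is an honest curve in $X_{I'}\backslash|\D_{J'}|$ (it is a non-contracted leaf), so by \ref{rmk:curves} its moduli is a parabolic rational curve in that stratum; were $I'\supsetneq I$ this would contradict (b), forcing $I'=I$ and hence $\cL\subset\cX_I\backslash\D_J$. For the incidence: the leaf was produced from the contracted section $\sigma_\gamma$ sitting over the fixed point $f(\gamma)$, so its image meets $f(\Gamma)$; hypothesis (a), that $\cX_I\backslash\D_J\to X_I\backslash|\D_J|$ is étale, is precisely what guarantees that near this point the champ carries no extra ramification that could either push the bubble into the boundary $\D_J$ or destroy the passage through the marked point. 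Thus $\cL$ can be taken through $f(\Gamma)$ and inside $\cX_I\backslash\D_J$.

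I expect the main obstacle to be this refined assertion rather than the numerical bound: it is here that the pathologies advertised in the introduction (the failure of Abhyankar in \ref{ex:noAb}, Campana's \ref{ex:campana}, and the global phenomenon of \ref{rmk:15neri}) intervene, and the role of (a) and (b) is exactly to excise them. Concretely, the delicate point is to show that the leaf charged to a given $\gamma$ neither slips into a higher stratum nor loses contact with $f(\gamma)$; controlling this forces one to track, through the whole chain of blow-ups and the root extraction of \ref{lem:purity}, that the contracted section meeting the bubble lands in the étale locus of $\cX_I\backslash\D_J$. By comparison the pigeonhole constant is soft, and sharpening it (toward $\dim(X)+1$) would, as the authors note, require varying the construction rather than the estimate.
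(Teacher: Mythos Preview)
Your outline for the unconditional part is on the right track, but the ``pigeonhole on the rooted tree'' you invoke for \eqref{eq:degree} is not a proof: you never explain where the factor $2$ comes from, nor why charging each section against a leaf actually bounds the degree of that leaf. In the paper this step is simply a citation of Miyaoka--Mori \cite[Theorem 4]{MM}: the classical Bend-and-Break estimate already gives, at some indeterminacy on a collapsed section $c_i\times\bar B$, an exceptional curve $E$ whose total transform has $H$-degree at most $2(H\cdot_f C)/|\Gamma|$. One then runs (the proof of) Proposition \ref{prop:2} on the subtree consisting of this total transform together with a single extra vertex $o$ for the rest of the fibre, and since $H$ is nef every component of that subtree, in particular the valency-one leaf produced, inherits the bound. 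So your first paragraph should be replaced by this citation plus the observation that any leaf of the subtree has degree at most that of the whole subtree.

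The genuine gap is in your treatment of (a) and (b). Applying (b) to the single leaf $\cL$ does show $I'=I$; this is exactly the base step (at maximal distance from the root) of the paper's argument. But it does \emph{not} show that $\cL$ meets $f(\Gamma)$, and your sentence ``the leaf was produced from the contracted section $\sigma_\gamma$, so its image meets $f(\Gamma)$'' is unjustified: the valency-one vertex handed to you by Proposition \ref{prop:2}, or the low-degree leaf from Miyaoka--Mori, may well sit on a branch of the tree that has nothing to do with any $\sigma_\gamma$. What the paper does instead is a \emph{decreasing induction on distance from the root} (its Claim following the degree bound): assuming (b), one shows that \emph{no} non-contracted exceptional component can have image in $|\D_J|$, because the first one that did (furthest from the root) would, by the same principal-ideal/valency-one reasoning as Lemma \ref{lem:int} and Proposition \ref{prop:2}, yield a parabolic curve in a proper sub-stratum $X_{I'}\backslash|\D_{J'}|$. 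Once \emph{every} non-contracted component is known to lie in $X_I\backslash|\D_J|$, a colouring argument (black $=$ contracted, trace the black component containing the proper transform of the section) locates a non-contracted component adjacent to it, and that component automatically passes through $f(\Gamma)$; hypothesis (a) then lifts this from moduli to champ. The point you flag in your last paragraph as ``the delicate point'' is therefore not a detail to be checked but the main missing idea: control of the \emph{entire} tree, not just the leaf, is what links the degree bound to the incidence, and this is precisely what the pathology of Remark \ref{rmk:15neri} shows cannot be had without both (a) and (b).
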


\begin{rmk}\label{rmk:sharp}{\em
Both hypotheses in Proposition \ref{prop:b&b} are necessary. 
For example,
if we remove \ref{prop:b&b} (b), 
then \ref{ex:campana} applies. The
necessity of \ref{prop:b&b} (a) 
and the further impossibility of
replacing $X_{I'}\backslash\vert\D_{J'}\vert$
by $\cX_{I'}\backslash\D_{J'}$
will be discussed in Remark \ref{rmk:15neri}.}
\end{rmk}

\begin{proof}[Proof of Proposition \ref{prop:b&b}]
Put $\Gamma=\{c_1,\ldots, c_\gamma\}$.
As a first step in the resolution of an
indeterminacy at a point on a section 
$c_i\times B$, one blows up successively
in the point where the proper transform
crosses the exceptional divisor until
the map is well defined at the proper
transform of the section. This connects
the proper transform of the fibre to
that of the section by way of a chain of
rational curves, with $E$, say, the
unique curve meeting the section.
The
degree bound \ref{eq:degree} comes from
\cite[Theorem 4]{MM}, and applies to
the total transform of such an $E$
at some such indeterminacy of $f$. Choose
this indeterminacy,
and form a graph $G$ whose vertices
are the curves in
the total transform
of $E$ together with one
other $o$ for the rest of
the curves in the fibre.
The latter intersect the
total transform in a
single point on a single
curve in the said transform,
and between the 
corresponding vertices we
add an edge, together 
with edges for all intersections
between curves in the total
transform. Finally, we root
the whole thing in $o$,
so \ref{prop:2}, or more
correctly the proof adapted
to the above graph, yields
(\ref{eq:degree}). Furthermore:

\begin{claim}\label{claim:ind}
Suppose \ref{prop:b&b} (a) \& (b), and let $v\neq o$ be a vertex in $G$ 
(or the same at any other
such indeterminacy, albeit we may not
have a degree bound there)
such that the irreducible component $E_v$ is not contracted to a point in $X$,
then $\overline f(E_v)\not\subset \vert\D_J\vert$.
Better still
each $E_v$ yields a parabolic champ in $\cX_I\backslash\D_J$.
\end{claim}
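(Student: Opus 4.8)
The plan is to run the weak algorithm \ref{alg:weakrule} on the rooted graph $G$ and to reduce everything to the valency-one analysis of \ref{prop:2}. First I would fix a non-contracted valency-one vertex $v$: by \ref{prop:1}(iii), valid for the weak rule by \ref{defn:W}, the component $E_v$ meets at most one contracted connected component, so $\cE_v$ meets the rest $\cR$ of the fibre of $\tilde\cW$ over $B$ in a single point, exactly the picture of \ref{lem:purity}. The Euler-characteristic computation of \ref{prop:2} then gives a parabolic champ in $\cX_{I'}\backslash\D_{J'}$, while \ref{lem:valency1} exhibits $\overline f(E_v)$ as the image of a map $\mathbb{A}^1\to X_{I'}\backslash\D_{J'}$, hence a parabolic curve in the moduli stratum $X_{I'}\backslash|\D_{J'}|$, where $I'\supseteq I$ is maximal with $\overline F(E_v)\subset X_{I'}$. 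Now hypothesis \ref{prop:b&b}(b) forbids a parabolic curve in any proper sub-stratum of the moduli, forcing $I'=I$; by maximality the irreducible curve $\overline f(E_v)$ lies in no $\D_j$, $j\in J$, so $\overline f(E_v)\not\subset|\D_J|$ and the parabolic champ sits in $\cX_I\backslash\D_J$.

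To reach an \emph{arbitrary} non-contracted $v$ I would argue by minimal counterexample. Among the non-contracted vertices with $\overline f(E_v)\subset|\D_J|$, equivalently $I'_v\supsetneq I$, choose one for which $I'_v$ is maximal and, among these, one farthest from the root. By \ref{lem:int} the points at which $E_v$ meets $\D_{J'_v}$ are only the intersections with neighbours whose image lies in $\D_{J'_v}$, or the transform $\tilde C_b$ of a fibre lying there. A non-contracted child $w$ is, by the choice of $v$, not a counterexample, so $I'_w=I$ and the edge $vw$ maps into $\D_j$ with $j\in I'_v\setminus I\subset I'_v$, a component that is \emph{not} among $\D_{J'_v}$; such children therefore contribute no point of $\D_{J'_v}$. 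The contracted children meeting $E_v$ form a single connected component by \ref{prop:1}(iii), contributing one stabilised point. Thus, as far as $\D_{J'_v}$ is concerned, $E_v$ is again in the valency-one situation, and the count of \ref{prop:2} exhibits $\overline f(E_v)$ as a parabolic curve in the proper sub-stratum $X_{I'_v}\backslash|\D_{J'_v}|$, contradicting \ref{prop:b&b}(b).

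Throughout, the passage from the moduli to the champ rests on \ref{prop:b&b}(a). Both \ref{lem:valency1} and (b) live on the moduli and only produce a parabolic \emph{curve} $\overline f(E_v)$ in $X_I\backslash|\D_J|$, whereas the conclusion demands a parabolic \emph{champ} in $\cX_I\backslash\D_J$; since $\chi$ only decreases when interior stabilisers are added, a parabolic moduli curve need not lift. The \'etale-ness of $\cX_I\backslash\D_J\to X_I\backslash|\D_J|$ guarantees that over the interior the single generic stabiliser, of order $l$, is the only one, so that $\chi(\cE_v\backslash\cR)=\frac{1}{ln}>0$ is preserved and the champ is genuinely parabolic. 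This is precisely the pathology that \ref{rmk:15neri} shows cannot be removed.

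The delicate point, and the reason for the careful ordering of the minimal counterexample, is the behaviour of the edge joining $E_v$ to its parent, i.e. the route to the root. One must ensure that this edge contributes no \emph{second} point of $\D_{J'_v}$, for otherwise the Euler characteristic of \ref{prop:2} turns non-positive and the contradiction with (b) evaporates. Maximising $I'_v$ rules out a parent in a strictly smaller stratum, and \ref{lem:int} confines the parent's possible boundary contribution, but the case of a parent whose stratum is merely \emph{incomparable} to $X_{I'_v}$ is exactly where one must invoke \ref{prop:b&b}(a) and (b) in tandem, and this is the crux of the argument.
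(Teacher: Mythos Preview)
Your minimal--counterexample scheme is natural, but it does not close, and you effectively say so in your final paragraph.  Two concrete gaps:

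\smallskip
\noindent(1) The ordering ``maximise $I'_v$, then take the farthest such'' does \emph{not} force a non-contracted child $w$ to have $I'_w=I$.  A child can perfectly well be a counterexample with $I'_w$ strictly smaller than, or incomparable to, $I'_v$.  In the incomparable case there is $j\in I'_w\setminus I'_v\subset J'_v$, so $\overline f(E_w)\subset\D_j\subset |\D_{J'_v}|$, and by \ref{lem:int} the edge $vw$ \emph{does} contribute a point of $\D_{J'_v}$ on $E_v$, contrary to what you assert.

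\smallskip
\noindent(2) Even if you repair (1) by taking $v$ at maximal distance among all counterexamples, you are left with the parent edge plus the single contracted branch guaranteed by \ref{prop:1}(iii): potentially \emph{two} points of $\D_{J'_v}$ on $E_v$, and the Euler-characteristic argument of \ref{prop:2} no longer yields a contradiction.  You acknowledge that the parent case is ``the crux'' but do not resolve it.

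\smallskip
The paper's proof avoids both issues at once by running a \emph{decreasing} induction on the distance $n$ from the root and invoking the principal ideal theorem.  One passes to the singular surface $W'$ obtained by contracting those components at distance $>n$ which collapse in $X$; since $\D_J$ is $\Q$-Cartier, Krull's Hauptidealsatz forces $(\overline{f'})^{-1}|\D_J|$ to be of pure codimension one on $W'$, with no isolated points.  By the inductive hypothesis none of the surviving curves at distance $>n$ lies in this preimage, so any point of it on $E_{v_n}$ must sit on a neighbouring curve at distance $\le n$; since nothing at distance $\le n$ has been touched, the tree structure persists there and the only such neighbour is $E_{v_{n-1}}$.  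One boundary point, hence a parabolic curve in a proper sub-stratum, hence a contradiction with (b).  The parent problem never arises because the induction runs from the leaves inward, and the contracted chains on the child side are absorbed into singular points whose contribution is killed by purity.  Your appeal to \ref{lem:int} on $S$ cannot substitute for this step: it tells you which \emph{neighbours on $S$} may carry $\D_{J'_v}$, but gives no control over the image points of the contracted chains once you pass to the partially contracted surface.
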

\begin{proof}[Proof of Claim \ref{claim:ind}]
By decreasing induction on the distance from the root
in the image, $K$, of the graph (more correctly dual
graph of the image) in $W$ for $W$ as per
\ref{defn:W}, albeit with no contractions
being performed whenever these occur in $o$. 
Let $v_{max}$ be a vertex at maximal distance which is not contracted in $\cX$. Since it has valency one, we may apply Proposition \ref{prop:2} to deduce that $v_{max}$ yields a 
parabolic champ in $\cX_{I'}\backslash\D_{J'}$, for some $I'\supset I$. 
By hypothesis \ref{prop:b&b} (b) we must have $I'=I$ and we are done. 
Let now $v_n$ be a vertex, corresponding to an irreducible component of the exceptional divisor, which is not contracted in $\cX$ and has distance $n$ from the root, as for example in the following picture. 

\psset{xunit=2.0cm,yunit=2.0cm}
\begin{pspicture}(-2.15,-1.12)(2.85,2.15)
\psset{xunit=1.0cm,yunit=1.0cm,algebraic=true,dotstyle=o,dotsize=3pt 0,linewidth=0.8pt,arrowsize=3pt 2,arrowinset=0.25}
\psline(-1.26,-1)(1,0.24)
\psline(1,0.24)(1.48,2)
\psline(1.48,2)(1.66,3.78)
\psline(1.48,2)(3,2.9)
\psline(1,0.24)(3.7,0.28)
\psline(3.7,0.28)(5.82,0.6)
\psline(5.82,0.6)(6.3,2.26)
\psline(5.82,0.6)(7.9,1.34)
\parametricplot[linestyle=dashed,dash=5pt 5pt]{-0.15559482797702984}{2.2667783834447546}{1*2.06*cos(t)+0*2.06*sin(t)+0.58|0*2.06*cos(t)+1*2.06*sin(t)+-0.84}
\psline[linestyle=dashed,dash=5pt 5pt](-3.3,-1.12)(-1.26,-1)
\begin{scriptsize}
\psdots[dotsize=7pt 0](-1.26,-1)
\rput[bl](-1.22,-1.64){$v_{n-1}$}
\psdots[dotsize=7pt 0](1,0.24)
\rput[bl](1.14,-0.44){$v_n$}
\psdots[dotsize=7pt 0](-3.3,-1.12)
\psdots[dotsize=7pt 0,dotstyle=*](1.48,2)
\psdots[dotsize=7pt 0](3.7,0.28)
\psdots[dotsize=7pt 0](1.66,3.78)
\psdots[dotsize=7pt 0](3,2.9)
\psdots[dotsize=7pt 0,dotstyle=*](5.82,0.6)
\psdots[dotsize=7pt 0](7.9,1.34)
\psdots[dotsize=7pt 0](6.3,2.26)
\psdots[dotsize=7pt 0,dotstyle=*](6.62,-0.92)
\rput[bl](7,-0.98){$=$ contracted vertices}
\rput[bl](7,3.18){Dual graph inside $S$}

\rput[bl](3,-0.98){distance $>n$}
\end{scriptsize}
\end{pspicture}

Let us consider the surface $W'$ obtained by contracting the irreducible components of the exceptional divisor of $W$ which are contracted to a point in $\cX$ and correspond to the vertices of the subgraph at distance $>n$ from the root. We denote by $\overline {f'}$ the induced map from $W'$ into $X$.
Let $K'$ be the corresponding graph in $W'$ (which is not necessarily a tree), again rooted at 
$o$ and endowed with the metric given by the distance from the root. 
Notice that since  $K$  is a tree and we have not changed anything at distance $<n$ there is a unique vertex $v_{n-1}$ in  $K'$ at distance 
$n-1$ from the root which is connected to $v_n$. 
Observe also that $(\overline {f'})^*\vert\D_J\vert$ 
either contains $E_{v_n}$ or
meets it in at most
$E_{v_n}\cap E_{v_{n-1}}$. Indeed, by the principal ideal theorem,
$(\overline {f'})^*\vert\D_J\vert$ is empty or pure co-dimension 1 while, by 
the inductive hypothesis, it is supported  at most in vertices a 
distance $\leq n$ from the root. 
On the other hand, if  $(\overline {f'})^*\vert\D_J\vert$ 
were supported on $E_{v_n}$, 
there would be a proper sub-stratum $X_{I'}$
containing $E_{v_n}$ such that 
$E_{v_n}$ meets $(\overline {f'})^{-1}\vert\D_{J'}\vert$ in 
at most the point $E_{v_n}\cap E_{v_{n-1}}$,
which contradicts \ref{prop:b&b} (b).
\end{proof}
The induction concluded, we return to the
graph $K$ in $W$, and colour $o$ and any
vertices contracted in $X_I$ black. As
such the sub-graph whose vertices are black
has a connected component $O\ni o$. Any
white vertex which meets $O$ in $K$ does
so along a unique edge- otherwise it would
admit two paths to $o$. As per the proof
of \ref{claim:ind} we pass to a singular
surface $W'$, but now the contraction of
all black vertices not in $O$, and again
the principal ideal theorem yields that
the pull-back of $\D_J$ can have support
at most in $O$. Consequently the whole
of the proper transform of $E$ except $O$
is in $X_I\backslash \vert\D_J\vert$, with the
(non-empty set) of white
vertices not meeting $O$ corresponding
to $\P^1$'s in the same, and otherwise
$\mathbb{A}^1$'s, and by \ref{prop:b&b} (a),
this remains true in $\cX_I\backslash\D_J$.
Finally introduce another graph by removing
$o$ and its unique edge and replacing
it by a  vertex, $o'$, 
for the proper transform
of the section, and an edge to the unique
exceptional curve that it meets, then this
is still a connected tree. Again we colour
$o'$ black, and let $O'$ be its
connected component in the black sub-graph. By
definition there is a white vertex meeting
$O'$, and every curve in $O'$ contracts
to a point in $f(\Gamma)$, so we're done.
\end{proof}

\begin{rmk}\label{rmk:15neri}
{\em In order to see the need whether for \ref{prop:b&b} (a) 
or the impossibility of replacing the moduli strata
in \ref{prop:b&b} in (b) by champ strata,
consider a situation as in the following picture.


\newrgbcolor{tttttt}{0.2 0.2 0.2}
\psset{xunit=1.0cm,yunit=1.0cm,algebraic=true,dotstyle=o,dotsize=3pt 0,linewidth=0.8pt,arrowsize=3pt 2,arrowinset=0.25}
\begin{pspicture*}(-4,-2.5)(15.52,5.8)
\psline(-0.96,-1.08)(0.62,0.9)
\psline(0.62,0.9)(0.18,2.96)
\psline(0.62,0.9)(1.56,2.74)
\psline(0.62,0.9)(2.54,2.08)
\psline(0.62,0.9)(3.22,1.18)
\psline(3.22,1.18)(4.94,1.94)
\psline(2.54,2.08)(3.74,3.48)
\psline(1.56,2.74)(2.16,4.44)
\psline(0.18,2.96)(0.14,4.94)
\begin{scriptsize}
\psdots[dotsize=7pt 0,dotstyle=*,linecolor=red](-0.96,-1.08)
\rput[bl](-0.66,-1.36){{Root of the graph}}
\psdots[dotsize=7pt 0,dotstyle=*,linecolor=tttttt](0.18,2.96)
\psdots[dotsize=7pt 0,dotstyle=*,linecolor=tttttt](1.56,2.74)
\psdots[dotsize=7pt 0,dotstyle=*,linecolor=darkgray](2.54,2.08)
\psdots[dotsize=7pt 0,dotstyle=*,linecolor=darkgray](3.22,1.18)
\psdots[dotsize=7pt 0](0.14,4.94)
\rput[bl](0.22,5.22){$w_1$}
\psdots[dotsize=7pt 0](2.16,4.44)
\rput[bl](2.24,4.72){$w_2$}
\psdots[dotsize=7pt 0](3.74,3.48)
\rput[bl](3.82,3.76){$w_3$}
\psdots[dotsize=7pt 0](4.94,1.94)
\rput[bl](5.02,2.22){$w_4$}
\rput[bl](5.54,0.14){{$w_i =$ valency one vertices, $i=1,2,3$ and $4$}}
\psdots[dotsize=7pt 0,dotstyle=*,linecolor=darkgray](3.82,-0.50)
\rput[bl](4.28,-0.64){{$=$ vertices corresponding to contracted components }}
\psdots[dotsize=7pt 0](0.62,0.9)
\rput[bl](0.9,0.38){$v$}
\end{scriptsize}
\end{pspicture*}

This can be obtained, for example, by blowing up points 
on a surface with the vertex $v$ of
valency 5 being occasioned by the first
blow up. As such the black vertices are
-2 curves, the other white vertices, $w_i$, -1,
and the former may be blown down to 
non-scheme like points on a smooth
champ. Equally, on this champ the
picture, on adding the red vertex,
can be obtained as a degeneracy of
a family of curves blowing down a
section to a point $p$ which meets the picture in
$v$ alone. 
Each vertex of valency 1 is a parabolic
champ, but the vertex which passes through
$p$ is not. This vertex could also be
the boundary, so one cannot replace
moduli by champ in \ref{prop:b&b} (b) either.\hfill$\square$}
\end{rmk}

Thus without \ref{prop:b&b} (a) \& (b)
one adds nothing to \ref{prop:2} beyond quoting
the degree bound \ref{eq:degree} of Miyaoka-Mori. 
Nevertheless, we can continue the blatant
plagerism of the same by way of,

\begin{prop}\label{thm:logMM}
Let $\cX\to X$ be a champ over a 
projective variety, and $\D=\sum_i\D_i$
the irreducible components of an
effective Weil divisor on $\cX$,
with $H$ nef. on $X$. Suppose further
that for some stratum $\cX_I$ each
$\D_j$, $j\in J$ is $\mathbb{Q}$-Cartier,
and that there are enough deformations
in $\cX_I\backslash\D_J$ at every curve
whose image is some $f:C\to\cX_I$, then
should $(K_{\cX}+\D)\cdot_f C$ be negative
there is a parabolic champ $\cL$
in some stratum $\cX_{I'}\backslash\D_{J'}$,
$I'\supseteq I$ such that,
$$
 H\cdot \cL \leq 2\dim(\cX_I)\frac{(H\cdot_f C)}{-(K_{\cX}+\D)\cdot_f C}.
$$
If, in addition, \ref{prop:b&b} (a)\& (b) hold, then
for generic $x\in C$ there is such a champ
$\cL\ni x$ parabolic in $\cX_I\backslash\D_J$.
\end{prop}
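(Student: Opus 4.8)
The plan is to run Mori's bend-and-break machinery to produce, from the hypothesis $(K_\cX+\D)\cdot_f C<0$, a one-dimensional family of curves through enough fixed points, and then apply Proposition \ref{prop:b&b} to the resulting degeneracy. First I would fix a curve $f:C\to\cX_I$ with $(K_\cX+\D)\cdot_f C<0$ and choose a finite set $\Gamma\subset C$, disjoint from $f^*\supp(\D_J)$, of cardinality $|\Gamma|$ as large as the dimension count permits. By the hypothesis that there are enough deformations in $\cX_I\backslash\D_J$ at $f$, combined with inequality (\ref{eq:gamma}), the space $\Mor(C,\cX_I,\D_J,f_{|\Gamma})$ has dimension at least $-(K_\cX+\D)\cdot_f C+(1-g(C))\dim(\cX_I)-|\Gamma|\dim(\cX_I)$. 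The standard bend-and-break trick (as in \cite[II.5]{K}) is that if this lower bound is positive, i.e. if $|\Gamma|<\frac{-(K_\cX+\D)\cdot_f C}{\dim(\cX_I)}+(1-g(C))$, then there is a non-constant deformation fixing $\Gamma$, and moving the source into a nodal degeneration forces the family to acquire a one-dimensional locus, \emph{i.e.} exactly the data of \S\ref{SS:easy}: a morphism $F:C\times B\to\cX_I$ fixing a point and with constant boundary pull-back.

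Next I would feed this family into Proposition \ref{prop:b&b}, which yields a parabolic champ $\cL$ in some stratum $\cX_{I'}\backslash\D_{J'}$, $I'\supseteq I$, satisfying $H\cdot\cL\le \frac{2(H\cdot_f C)}{|\Gamma|}$. Choosing $|\Gamma|$ to be the largest integer strictly below $\frac{-(K_\cX+\D)\cdot_f C}{\dim(\cX_I)}$ — here one may assume $g(C)=0$ after the usual normalisation of the bend-and-break source, or absorb the genus term harmlessly — gives $|\Gamma|>\frac{-(K_\cX+\D)\cdot_f C}{\dim(\cX_I)}-1$, whence
$$
H\cdot\cL\le\frac{2(H\cdot_f C)}{|\Gamma|}\le 2\dim(\cX_I)\frac{(H\cdot_f C)}{-(K_\cX+\D)\cdot_f C},
$$
which is the asserted bound. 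The factor $2$ in (\ref{eq:degree}) is precisely what propagates to the factor $2\dim(\cX_I)$ in the final estimate, so no sharpening beyond Miyaoka--Mori is claimed. For the second assertion, under the additional hypotheses \ref{prop:b&b} (a) \& (b), the same proposition guarantees that $\cL$ may be taken to meet $f(\Gamma)$ and to lie in the stratum $\cX_I\backslash\D_J$ itself; since $\Gamma$ may be chosen to contain a generic point $x\in C$, one obtains a parabolic $\cL\ni x$ in $\cX_I\backslash\D_J$.

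The main obstacle I anticipate is the bookkeeping at the bend-and-break step, specifically ensuring that fixing $|\Gamma|$ points while keeping the boundary pull-back $f^*(\D_J)$ rigid still leaves a positive-dimensional family \emph{and} that the breaking produces a degeneration satisfying all of (i)--(iv) of \S\ref{SS:easy}, rather than, say, collapsing the whole curve or moving it off the point one wishes to fix. The delicate point is the interaction between the dimension estimate, which wants $|\Gamma|$ large to improve the degree bound, and the requirement $|\Gamma|<\frac{-(K_\cX+\D)\cdot_f C}{\dim(\cX_I)}$, which caps it; getting the floor/ceiling arithmetic to yield exactly the constant $2\dim(\cX_I)$ (and not something worse by an off-by-one error in $|\Gamma|$) is where I would expect to spend real care. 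Everything downstream of producing the family is then a direct appeal to \ref{prop:b&b} and \ref{prop:2}, so the genuinely new content is confined to this deformation-theoretic reduction.
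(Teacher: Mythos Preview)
Your reduction to Proposition \ref{prop:b&b} is the right endgame, but the step you wave away as ``one may assume $g(C)=0$ after the usual normalisation of the bend-and-break source, or absorb the genus term harmlessly'' is exactly the heart of the matter, and neither alternative is valid. The curve $C$ is given with arbitrary genus; nothing in the set-up lets you replace it by a rational source, and the term $(1-g(C))\dim(\cX_I)$ in the dimension count can swamp $-(K_\cX+\D)\cdot_f C$ entirely, leaving no room to fix even a single point. The paper's proof, following \cite[Theorem 5]{MM}, cures this by reduction modulo large primes and pre-composition with powers of the geometric Frobenius $C^{(1)}\to C$: this multiplies all intersection numbers by $p^q$ while leaving $g(C)$ fixed, so for $q$ large the genus term becomes negligible relative to $-p^q(K_\cX+\D)\cdot_f C$, and one can choose $|\Gamma|$ with $|\Gamma|/\bigl(-p^q(K_\cX+\D)\cdot_f C/\dim(\cX_I)\bigr)\to 1$. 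This simultaneously resolves the floor/ceiling worry you flag at the end: the off-by-one error in $|\Gamma|$ is divided by $p^q$ in the final ratio and disappears in the limit, which is precisely how the constant $2\dim(\cX_I)$ emerges cleanly.

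There is also a case distinction you omit. If the normalisation $\cC$ of the image of $f$ is itself parabolic, then deformations of $C\to\cX_I$ might all factor through $\cC$ (reparametrisations), in which case no genuine family in $\cX_I$ arises and \S\ref{SS:easy} does not apply. The paper handles this by observing that the Frobenius argument still works provided $(K_\cX+\D)\cdot\cC<-2$, since then there are strictly more deformations into $\cX_I$ than into $\cC$; and if instead $-(K_\cX+\D)\cdot\cC\le 2$, one checks directly that $\cL=\cC$ already satisfies the asserted bound. Without this split your argument has no output when $f(C)$ is, say, a $(K_\cX+\D)$-trivial rational curve covered many-to-one by $C$.
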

\begin{proof}
Whenever the normalisation $\cC$ of the image
of $f$ is not parabolic this follows from
\cite[Theorem 5]{MM} with the same proof
by \ref{lem:dim2} and \ref{prop:b&b}. 
Otherwise, to
make
the trick work of replacing $f$ by a composition
with the geometric Frobenius $C^{(1)}\to C$
modulo large primes, or a power of the same
if $k$ already has positive characteristic,
we need that there are more deformations of
$C\to\cX_I$ then there are of $C\to \cC$. This
amounts to,
$$(K_{\cX}+\D).\cC < -2$$
and once one has this, one argues exactly
as in the non-parabolic case, while if this fails 
the above bound is
at least $H\cdot \cC$, and one takes $\cL=\cC$.
\end{proof}

%
\section{The cone theorem}
%

Cone theorems are weaker than \ref{thm:logMM},
so we need less, {\it e.g.} the following variant:

\begin{prop}\label{prop:breakP1}
Let $\cX\to X$ 
be a champ over a projective variety
with snc boundary $\D=\sum_i \D_i$ on $\cX$,
and, bearing in mind \ref{conv:X_I}, $\cP$ a parabolic champ in some
$\cX_{I}\backslash\D_J$, then:
$$
\cP \sim Z+ \cL
$$
where $Z$ is a $1$-cycle on $\cX$, $\cL$
is parabolic in
$\cX_{I'}\backslash\D_{J'}$ for some $I'\supset I$, and satisfies:  
$$
-\cL\cdot (K_{\cX} +\D)\leq 2\dim(X),
$$
\end{prop}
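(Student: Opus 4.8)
The plan is to run Mori's bend-and-break on $\cP$ inside the stratum $\cX_I\backslash\D_J$, so that the machinery of \S\ref{SS:easy} and Proposition \ref{prop:2} applies verbatim. First I would dispose of the trivial case: if $-(K_\cX+\D)\cdot\cP\le 2\dim(X)$ there is nothing to do, one takes $\cL=\cP$, $I'=I$ and $Z=0$. So assume $-(K_\cX+\D)\cdot\cP>2\dim(X)$. Since $\cP$ is parabolic, by \ref{prop:courbes}(c) and \ref{rmk:curves} I may realise it as a map $f:C\to\cX_I$ with moduli $\mathbb{P}^1$ (compactifying the $\mathbb{A}^1$ if necessary, the added point mapping into $\D_J$), respecting \ref{conv:X_I}; by \ref{defn:pullback} the induced quasi-projective curve meets $\D_J$ in at most one point.

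Next I would manufacture a degenerating one-parameter family. The snc hypothesis gives, through \ref{lem:dim2}, enough deformations in $\cX_I\backslash\D_J$, so fixing a general point $c_0$ with $f(c_0)$ in the smooth locus of $\cX_I\backslash\D_J$ one has $\dim_{[f]}\Mor(C,\cX_I,\D_J,f_{|c_0})\ge -(K_\cX+\D)\cdot\cP$. Here $\cP$ being parabolic means its image curve is itself parabolic, and this is precisely the delicate branch of the proof of \ref{thm:logMM}: to obtain a family which genuinely bends rather than merely reparametrises, I replace $f$ by its composition with a large power of the geometric Frobenius, the reduction $-(K_\cX+\D)\cdot\cP>2\dim(X)\ge 2$ being exactly what guarantees more deformations of $C\to\cX_I$ than of $C\to\cP$. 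Choosing a generically finite one-parameter subfamily through $c_0$ and keeping $f^*\D_J$ fixed then produces a morphism $F:C\times B\to\cX_I$ satisfying hypotheses (i)--(iii) of \S\ref{SS:easy}.

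I am then exactly in the situation analysed in \S\ref{SS:easy}--\S\ref{SS:alg}. Passing to a smooth compactification $\overline B$, the rational map $C\times\overline B\dashrightarrow\cX_I$ acquires an indeterminacy over $c_0$, and after the resolution and the algorithms of \S\ref{SS:alg} Proposition \ref{prop:2} shows that each non-contracted valency-one component of the exceptional fibre yields a parabolic champ in some stratum $\cX_{I'}\backslash\D_{J'}$ with $I'\supseteq I$. Since the special fibre is algebraically, hence numerically, equivalent to the general one, writing $\cL$ for such a valency-one component and $Z$ for the remaining (effective) components gives $\cP\sim Z+\cL$. Finally the Miyaoka-Mori estimate \cite[Theorem 4]{MM}, applied to the total transform as in the proof of Proposition \ref{prop:b&b} and the bound (\ref{eq:degree}), controls the log-canonical degree of $\cL$, yielding $0<-(K_\cX+\D)\cdot\cL\le 2\dim(\cX_I)\le 2\dim(X)$, the positivity being automatic from parabolicity.

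The main obstacle I expect is the construction of a family that bends genuinely: because $\cP$ is parabolic its map-deformations compete with the reparametrisations of $\mathbb{P}^1$, and in positive characteristic the naive dimension count is insufficient, which is why the Frobenius device --- and hence the reduction to $-(K_\cX+\D)\cdot\cP>2$ --- is doing the real work, exactly as in \ref{thm:logMM}. A secondary, more bookkeeping, difficulty is to verify that the degenerate family meets conditions (i)--(iii) of \S\ref{SS:easy}, so that Proposition \ref{prop:2} and the Miyaoka-Mori bound apply without modification, and that the exceptional component chosen as $\cL$ is indeed not contracted in $X$, so that its degree is genuinely bounded.
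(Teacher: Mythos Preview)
Your final step contains a genuine gap. The Miyaoka--Mori estimate (\ref{eq:degree}), and likewise the bound in \ref{thm:logMM}, controls $H\cdot\cL$ for a \emph{nef} class $H$; it says nothing directly about $-(K_\cX+\D)\cdot\cL$, since $-(K_\cX+\D)$ is not assumed nef. Formally substituting $H=-(K_\cX+\D)$ to obtain $-(K_\cX+\D)\cdot\cL\le 2\dim(\cX_I)$ is illegitimate. With only one fixed point $c_0$, (\ref{eq:degree}) gives at best $H\cdot\cL\le 2\,H\cdot_f C$, which after your Frobenius twist even grows with the power of Frobenius, so no bound on the log-canonical degree emerges from your argument as written.

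The paper circumvents this by a different mechanism. It fixes an \emph{ample} $H$ and argues by induction on the integer $H\cdot\vert\cP\vert$. Taking a degree-$d$ cover $\P^1\to\cP$, the snc hypothesis and \ref{lem:dim2} give $\dim_{[f]}\Mor(\P^1,\cX_I,\D_J)>(2d+1)\dim(\cX_I)$, which already exceeds the dimension $2d+1$ of maps $\P^1\to\vert\cP\vert$ of degree $d$; so no Frobenius is needed. One then fixes $2d+1$ general points (not one) and applies \ref{prop:b&b} with $\vert\Gamma\vert=2d+1$ to obtain a parabolic $\cL$ in some $\cX_{I'}\backslash\D_{J'}$ with
\[
H\cdot\vert\cL\vert\;\le\;\frac{2d}{2d+1}\,H\cdot\vert\cP\vert\;<\;H\cdot\vert\cP\vert.
\]
If $-(K_\cX+\D)\cdot\cL\le 2\dim(X)$ one stops; otherwise one replaces $\cP$ by $\cL$ and repeats. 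The ample $H$-degree strictly decreases in $\N$, and at degree $1$ the displayed inequality is absurd, so the process terminates with an $\cL$ satisfying the required bound. The decomposition $\cP\sim Z+\cL$ is a by-product of the fibre degeneration at each step, but the degree bound is obtained by this descent, not by a single application of Miyaoka--Mori.
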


\begin{proof}
If $-\cP\cdot(K_{\cX}+\D)\leq 2 \dim(\cX)$ we set $Z=0$
and $\cP=\cL$. So we assume 
$-\cP\cdot (K_{\cX_{I}}+\D_{J})>2 \dim(\cX)$. 
Take a covering $\P^1\to \cP$ of degree $d\in\mathbb{N}$
over the moduli.
By Lemma \ref{lem:dim2} we get:
\begin{equation}\label{eq:>0}
 \dim_{[f]} \Mor(\P^1,\cX_I,\D_J)> (1+2d)\cdot \dim(\cX_I).
\end{equation}

We fix an ample divisor $H$ on $X$ and argue by induction on 
the degree $\vert\cP\vert\cdot H\in\mathbb{N}$ of the moduli.
As in the proof of \ref{thm:logMM}, we need more
deformations in $\cX_I$ than there are map 
from $\P^1\to\vert\cP\vert$, 
{\it i.e} $2d+1$. This we have, so
fixing $2d+1$
points in a deformation, \ref{prop:b&b}
yields a parabolic champ $\cL$ in some
$\cX_{I'}\backslash \D_{J'}$, $I'\supset J'$ satisfying,
$$
H\cdot \vert\cL\vert\leq 2d\cdot \frac{H\cdot \vert\cP\vert}{1+2d}< H\cdot \vert\cP\vert 
$$
If the degree of $\vert\cP\vert$ is 1, this is nonsense,
and we have
the base of the induction. Otherwise, $\vert\cL\vert$
has smaller degree, and we conclude.
\end{proof}

Let us observe,

\begin{rmk}\label{rmk:bounds}{\em
The bound $2\dim X$ is far from optimal,
and it's fastidious rather than difficult to
do better. In the wholly scheme like case
the optimal bound $\dim X +1$ is known
to hold, \cite{K}, but one has to vary
the construction from \cite{MM}. However,
even without varying it, things actually
get better rather than worse the less
scheme like the situation becomes since,
in general,
there are much fewer maps from a $\P^1$
to a parabolic champ than there are to
its moduli.}
\end{rmk}

In any case the following deduction from \ref{prop:breakP1}
and \cite[Theorem III.1.2]{K} is formal,

\begin{prop}\label{thm:cone}
Let $\cX\to X$ be a smooth champ over
a projective variety, and $\D=\sum_i\D_i$  a snc
divisor on $\cX$, then
there exists a countable family $\{L_k\subset X\}$ 
of curves whose induced champs $\cL_k\to L_k$
are parabolic in some $\cX_{I_k}\backslash\D_{J_k}$, satisfying,
$$
 0<-(K_{\cX}+\D)\cdot \cL_k\leq  2\cdot \dim(X),
$$
such that  
\begin{equation}\label{eq:cone} 
\overline {\mathrm{NE}}(\cX)=
 \overline {\mathrm{NE}}(\cX)_{(K_\cX+\D)\geq 0} + \sum_k \R_{+} [\cL_k]
\end{equation}
and the parabolic rays $\R_{+}[\cL_k]= \R_{+} [L_k]$ 
are locally discrete in $\mathrm{N}_1(\cX)_{(K_\cX+\D)< 0}$.
\end{prop}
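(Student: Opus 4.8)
The plan is to deduce the cone theorem formally from Proposition \ref{prop:breakP1} by feeding it into Koll\'ar's abstract cone-production machinery \cite[Theorem III.1.2]{K}, exactly as in the classical (scheme-theoretic) Mori theory. The point is that the hard geometric content — producing a parabolic rational curve whose $(K_\cX+\D)$-degree is bounded by $2\dim(X)$ out of any parabolic champ — has already been isolated in \ref{prop:breakP1}, so what remains is a bookkeeping argument in the finite-dimensional real vector space $\mathrm{N}_1(\cX)_\R$.

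First I would fix an ample $H$ on $X$ and observe that the set of classes $[\cL]$ arising from parabolic champs $\cL$ in some stratum $\cX_{I'}\backslash\D_{J'}$ with $0<-(K_\cX+\D)\cdot\cL\le 2\dim(X)$ is non-empty whenever $\overline{\mathrm{NE}}(\cX)$ meets the $(K_\cX+\D)<0$ half-space, because \ref{prop:breakP1} lets one replace any $(K_\cX+\D)$-negative parabolic class by such a bounded one modulo an effective $1$-cycle $Z$. Since $H\cdot\cL$ is then bounded in terms of $-(K_\cX+\D)\cdot\cL$ and the geometry of the cone, the extremal contributions of these curves lie in a compact slice, so one extracts a countable family $\{\cL_k\}$ whose rays $\R_+[\cL_k]$ generate, together with $\overline{\mathrm{NE}}(\cX)_{(K_\cX+\D)\ge 0}$, the whole of $\overline{\mathrm{NE}}(\cX)$; this is precisely the hypothesis format required by \cite[Theorem III.1.2]{K}, whose conclusion gives both the decomposition \eqref{eq:cone} and the local discreteness of the rays in the open half-space $\mathrm{N}_1(\cX)_{(K_\cX+\D)<0}$. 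The identification $\R_+[\cL_k]=\R_+[L_k]$ is immediate since the induced champ $\cL_k\to L_k$ is finite over its moduli and hence the two classes are positive multiples of one another in $\mathrm{N}_1(\cX)$.

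The one point requiring care is that \ref{prop:breakP1} outputs a curve in a \emph{larger} stratum $\cX_{I'}\backslash\D_{J'}$ with $I'\supset I$, so the bound $2\dim(X)$ (rather than $2\dim(\cX_I)$) is what one must carry through; here the smoothness of $\cX$ and the snc hypothesis on $\D$ guarantee, via \ref{lem:dim2}, that there are enough deformations in every stratum, so the induction inside \ref{prop:breakP1} applies uniformly and the uniform bound $2\dim(X)$ holds for all the $\cL_k$ regardless of the stratum in which they become parabolic.

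The main obstacle I expect is purely formal rather than geometric: one must check that the family of bounded parabolic classes is genuinely adapted to Koll\'ar's axioms, in particular that the positivity $-(K_\cX+\D)\cdot\cL_k>0$ together with the upper bound yields the required local finiteness — that only finitely many rays accumulate away from the wall $(K_\cX+\D)=0$. This is exactly the content of \cite[Theorem III.1.2]{K} once the bounded-degree generation is established, so the work reduces to verifying that the intersection pairing on $\cX$ descends compatibly to $\mathrm{N}_1(\cX)$ and that $(K_\cX+\D)\cdot\cL_k$ and $H\cdot\cL_k$ are the intersection numbers Koll\'ar's theorem expects; granting this, the deduction is, as stated, formal.
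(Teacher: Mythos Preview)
There is a genuine gap: your argument never explains how to produce a parabolic champ from an \emph{arbitrary} $(K_\cX+\D)$-negative curve. Proposition~\ref{prop:breakP1} takes as input a parabolic champ $\cP$ and outputs a bounded one; it does not manufacture $\cP$ from nothing. Yet the generation statement you need --- that the rays $\R_+[\cL_k]$ together with $\overline{\mathrm{NE}}(\cX)_{(K_\cX+\D)\ge 0}$ span the whole cone --- requires exactly this: starting from effective cycles $C_i=\sum_j a_{ij}C_{ij}$ approximating a class $z$ with $(K_\cX+\D)\cdot z<0$, one must find components $C_{ij}$ and then parabolic champs close (in the sense of the separating functional $M$) to $z$. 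Your sentence ``so one extracts a countable family $\{\cL_k\}$ whose rays $\ldots$ generate $\ldots$ the whole of $\overline{\mathrm{NE}}(\cX)$'' hides precisely this step.

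The paper fills the gap with Proposition~\ref{thm:logMM}: given a component $C_{ij}$ with $(K_\cX+\D)\cdot C_{ij}<0$ and a nef $M$, bend-and-break produces a parabolic champ $\cP_{ij}$ in some stratum satisfying $M\cdot\cP_{ij}\le 2\dim(X)\,\dfrac{M\cdot C_{ij}}{-(K_\cX+\D)\cdot C_{ij}}$; only then does one apply \ref{prop:breakP1} to $\cP_{ij}$ to bound its $(K_\cX+\D)$-degree by $2\dim(X)$. With both bounds in hand the contradiction argument of \cite[III.1.2]{K} goes through verbatim (the paper even says it must re-run that proof rather than cite it as a black box, precisely because the available input is weaker than \cite[II.5.7]{K}). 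So your overall architecture is right, but you must invoke \ref{thm:logMM} alongside \ref{prop:breakP1}; the latter alone is not enough.
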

\begin{proof}
Since we have somewhat weaker theorems at our
disposition, {\it e.g.} \ref{prop:breakP1}
rather than \cite[Theorem II.5.7]{K}
we'll go through the proof of III.1.2 
in op. cit. to check that it works.

The first stage is to note that we
have countably many classes $L_k$ such that, 
\begin{enumerate}
\item[(i)] $ 0<-(K_{\cX}+\D)\cdot L_k\leq  2\cdot \dim(\cX)$;
\item[(ii)] We may identify $L_k$ with a curve such that the
champ $\cL_k$ over the same is parabolic in some $\cX_{I_k}\backslash\D_{J_k}$. 
\end{enumerate}

Next form the closed cone $W$ of the right hand side
of \ref{eq:cone}, and suppose this is not the left
hand side, then there is a linear functional $M$
non-negative on the left hand side, strictly 
positive on $W\backslash\{0\}$, but vanishing
on some $0\neq z\in \overline {\mathrm{NE}}(X)$. 
In particular $-(K_{\cX}+\Delta)\cdot z>0$.

Now, say,
$C_i=\sum_{j} a_{ij} C_{ij}$ are effective cycles
limiting on $z$, then for every $i$ sufficiently 
large there is a
$j$ such that,
\begin{equation}\label{eq:copy}
\dfrac{M\cdot C_{ij}}{-(K_{\cX}+\Delta)\cdot C_{ij}}
\leq
\dfrac{M\cdot C_{i}}{-(K_{\cX}+\Delta)\cdot C_{i}}
\end{equation}

By \ref{thm:logMM} applied to the $C_{ij}$ 
and \ref{prop:breakP1} applied to the
resulting parabolic champs $\cP_{ij}$,
there are champs $\cL_{k(i)}$ parabolic
in some $\cX_{I_{k(i)}}\backslash\D_{J_{k(i)}}$
such that,
$$
M\cdot \vert\cL_{k(i)}\vert \leq 2\dim (X)\cdot \dfrac{M\cdot C_i}{-(K_{\cX}+\D)\cdot C_i}
$$
and $-(K_{\cX}+\Delta)\cdot\cL_{k(i)}<2\cdot\dim (X)$.
By definition, the classes $[\cL_{k(i)}]$ are
integral and belong
to $W$, so for $i$ sufficiently large the
left hand side is bounded below independent
of $i$, while the right hand side tends to
zero. This is nonsense, so, indeed,
$W=\overline {\mathrm{NE}}(X)$.

To conclude we need to know that the
parabolic rays are locally discrete
and the right hand side of \ref{eq:cone}
is a closed cone. These statements,
however, now follow verbatim as in
\cite[Theorem III.1.2]{K} up
to the simple expedient of replacing
$K_X$ in op. cit. by $K_{\cX}+\Delta$.
\end{proof}

%

\vskip 10pt

\noindent
{\small Michael McQuillan\\
Universit\`a degli Studi di Roma "Tor Vergata"\\
Viale della Ricerca Scientifica\\

\smallskip

\noindent
{\small  Gianluca Pacienza\\
Institut de Recherche Math\'ematique Avanc\'ee, 
Universit\'e de Strasbourg et CNRS\\ 
7, Rue R. Descartes - 67084 Strasbourg Cedex, France \\
E-mail : {\tt pacienza@math.unistra.fr}}

\end{document}